\newtheorem{thm}[subsection]{Theorem}
\newtheorem{prop}[subsection]{Proposition}
\newtheorem{cor}[subsection]{Corollary}
\newtheorem{lemma}[subsection]{Lemma}
\theoremstyle{definition}  
\newtheorem{example}[subsection]{Example}
\newtheorem{remark}[subsection]{Remark}
\newcommand{\dfn}{\textbf} 
\newcommand{\mdfn}[1]{\dfn{\mathversion{bold}#1}} 
\newcommand{\tens}              {\otimes}               
\newcommand{\iso}               {\cong}  
\newcommand{\cat}{\EuScript}    
\newcommand{\cS}{{\cat S}}
\newcommand{\field}[1]  {\mathbb #1} 
\newcommand{\F}         {\field F}
\newcommand{\N}         {\field N}
\newcommand{\Z}         {\field Z}
\renewcommand{\S}       {\field S}
\DeclareMathOperator*{\GL}{GL}
\DeclareMathOperator*{\im}{Im}
\DeclareMathOperator{\End}{End}
\DeclareMathOperator{\Sp}{Sp}
\DeclareMathOperator{\TO}{TO}
\DeclareMathOperator{\Eig}{Eig}
\DeclareMathOperator{\SYMP}{SYMP}
\DeclareMathOperator{\EVO}{EVO}
\DeclareMathOperator{\ODDO}{ODDO}
\newcommand{\ra}{\rightarrow}                   
\DeclareMathOperator{\Id}{Id}                            
\numberwithin{equation}{section}
\newenvironment{myequation}
  {\addtocounter{subsection}{1}\begin{eqnarray}}
  {\end{eqnarray}$\!\!$}
\DeclareMathOperator{\Iso}{Iso}
\DeclareMathOperator{\rank}{rank}
\newcommand{\tD}{\tilde{D}}
\newcommand{\talpha}{\tilde{\alpha}}
\newenvironment{bsmallmatrix}
  {\left [\begin{smallmatrix}}
  {\end{smallmatrix}\right ] }
\begin{document}

\title{Involutions in the topologists' orthogonal group}

\author{Daniel Dugger}
\address{Department of Mathematics\\ University of Oregon\\ Eugene, OR
97403} 

\email{ddugger@math.uoregon.edu}

\begin{abstract}
We classify conjugacy classes of involutions in the isometry groups of
nondegenerate, symmetric bilinear forms over the field $\F_2$.
The new component of this work 
focuses on the case of an orthogonal  form on an
even-dimensional space.  In this context
we show that the involutions satisfy a remarkable duality,
and we investigate several numerical invariants.
\end{abstract}

\maketitle

\tableofcontents

\section{Introduction}
Let $V$ be a finite-dimensional vector space over $\F_2$ equipped with a nondegenerate,
symmetric bilinear form $b$.  Write $\Iso(V)$ for the group of
isometries of $V$, meaning the group of
automorphisms 
$f\colon V\ra V$ such that $b(f(x),f(y))=b(x,y)$ for all
$x,y\in V$.  The goal of this paper is to classify the conjugacy classes of
involutions in $\Iso(V)$.  This involves three parts:
\begin{itemize}
\item[(P1)] Counting the number of conjugacy classes;
\item[(P2)] Giving a convenient set of representatives for the
conjugacy classes;
\item[(P3)] Giving a collection of computable invariants 
having the property that 
two involutions are in the same
conjugacy class if and only if they have the same invariants (for
brevity we will say that the invariants {\it completely separate\/} the conjugacy
classes).  
\end{itemize}

The motivation for solving this problem comes from an application in
topology.  Given a
compact manifold $M$ of even dimension $2d$, an involution $\sigma$ on $M$ induces
an involution $\sigma^*$ on $H^d(M;\F_2)$.  The cup product endows
this cohomology group with a nondegenerate, symmetric bilinear form,
and $\sigma^*$ is an isometry.  The conjugacy class of
$\sigma^*$ in $\Iso(H^d(M;\F_2))$ is an invariant of the topological
conjugacy class of $\sigma$.  That is, two involutions $\sigma$ and
$\theta$ on $M$ give isomorphic $\Z/2$-spaces only if $\sigma^*$ and
$\theta^*$ are conjugate inside of $\Iso(H^d(M;\F_2))$.  Thus, a
solution to (P3) yields topological invariants of the involution on
$M$.  These
invariants  play a  role in the classification of
$\Z/2$-actions on surfaces \cite{D}.

Beyond this initial motivation, however, this paper exists because
(P1)--(P3) have surprisingly nice answers.  The main work in the
present paper 
 occurs when $b$ is the standard dot product on $\F_2^n$, with $n$
 even. 
In this case the group $\Iso(V)$ turns out to have some remarkable
structures which aid in the classification.  In particular, the
involutions exhibit a surprising duality.

\medskip

To explain the results further, we first introduce two evident
invariants.  If $\sigma$ is in $\End(V)$ then we define
$D(\sigma)=\rank(\sigma+\Id)$ and call this the \mdfn{$D$-invariant}
of $\sigma$.  
It is an integral lift of the classical Dickson
invariant (see \cite[Theorem 11.43]{T}).  If $\sigma$ is an involution then the
Jordan form of $\sigma$ consists of $1\times 1$ blocks together with
$2\times 2$ blocks of the form $\begin{bsmallmatrix} 1& 1\\ 0 &
1\end{bsmallmatrix}$.  Then $D(\sigma)$ is simply the number of
$2\times 2$ blocks appearing, and of course this determines
the Jordan form.  So the $D$-invariant completely separates the conjugacy
classes for involutions in $\End(V)$.  Note that $0\leq D(\sigma)\leq
\frac{\dim V}{2}$ always (see Proposition~\ref{pr:D-half} below).  

For an involution $\sigma$ in $\Iso(V)$, we can consider the map
$V\mapsto \F_2$ given by $v\mapsto b(v,\sigma v)$.  It is non-obvious,
but easy to check, that this map is actually linear.  Let
$\alpha(\sigma)$ denote its rank, which is either $0$ or $1$.  This
is clearly an invariant of the conjugacy class of $\sigma$.

The pair $(V,b)$ is called \dfn{symplectic} if $b(x,x)=0$ for all
$x\in V$.  In this case $V$ has a symplectic basis, meaning a basis
$u_1,v_1,\ldots,u_n,v_n$ with $b(u_i,v_i)=1$ for all $i$, and all other pairings of
basis elements being zero.  It follows that $\Iso(V)$ is isomorphic to
the standard symplectic group $\Sp(2n)$ (all
matrix groups in this paper have matrix entries in $\F_2$, so we will
leave the field out of the notation).

In the symplectic case, the $D$ and $\alpha$ invariants completely
solve the conjugacy problem.  This is a classical result 
of Ashbacher-Seitz \cite{AS}, summarized in the following theorem.  
See also \cite[Section 6]{Dy1} for similar work.

\begin{thm}
\label{th:main-sp}
Suppose that $(V,b)$ is symplectic of dimension $2n$, and let
$\sigma\in \Iso(V)$ be an involution.
\begin{enumerate}[(a)]
\item If $\sigma$ and $\sigma'$ are two involutions in $\Iso(V)$, then
$\sigma$ and $\sigma'$ are conjugate if and only if
$D(\sigma)=D(\sigma')$ and $\alpha(\sigma)=\alpha(\sigma')$.
\item If $D(\sigma)$ is odd, then $\alpha(\sigma)=1$.  
\item The conjugacy classes of involutions in $\Iso(V)$ are in
bijective correspondence with the set of pairs $(D,\alpha)$ satisfying
$0\leq D\leq n$, $\alpha\in \{0,1\}$, and $\alpha=1$ when $D$ is odd.
The number of these conjugacy classes is
equal to
\[ \begin{cases}
\tfrac{3n+2}{2} & \text{if $n$ is even},\\
\tfrac{3n+1}{2} &\text{if $n$ is odd.}\\
\end{cases}
\]
\item Let $J=\begin{bsmallmatrix} 0 & 1 \\ 1 & 0\end{bsmallmatrix}$
and let $I=\begin{bsmallmatrix} 1 & 0 \\ 0 & 1\end{bsmallmatrix}$.
Let $M=\begin{bsmallmatrix} 1 & 0 & 1 & 1 \\
0 & 1 & 1 & 1 \\
1 & 1 & 1 & 0 \\
1 & 1 & 0 & 1
\end{bsmallmatrix}$.
Then the conjugacy classes of involutions in $\Sp(2n)$ are represented
by
\[ 
\begin{bmatrix} 
J \\
& I \\
&& I \\
&&& \ddots \\
&&&& I
\end{bmatrix},
\begin{bmatrix} 
J \\
& J \\
&& I \\
&&& \ddots \\
&&&& I
\end{bmatrix},\ldots,
\begin{bmatrix} 
J \\
& J \\
&& J \\
&&& \ddots \\
&&&& J
\end{bmatrix},
\]
together with
\[ I_{2n}, 
\begin{bmatrix} 
M \\
& I \\
&& \ddots \\
&&& I
\end{bmatrix},
\begin{bmatrix} 
M \\
& M \\
&& \ddots \\
&&& I
\end{bmatrix},
\begin{bmatrix} 
M \\
& M \\
&& \ddots \\
&&& M
\end{bmatrix}.
\]
The matrices in the first line have $\alpha=1$ and $D$ equal to
$1,2,3,\ldots,n$.  The matrices in the second line have
$\alpha=0$ and $D$ equal to $0,2,4,6,\ldots$ (terminating in either $n$
or $n-1$ depending on whether $n$ is even or odd).
\end{enumerate}
\end{thm}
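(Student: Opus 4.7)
My plan is to prove the theorem in the order (b), then (d) and (c), and finally the existence direction of (a). The starting point is to verify that $D$ and $\alpha$ are genuine conjugation invariants. $D$ is immediate from its definition as a rank. For $\alpha$, linearity of $q(v):=b(v,\sigma v)$ follows from symmetry of $b$ together with the isometry identity $b(v,\sigma w)=b(\sigma v,w)$: the cross term in $q(u+v)$ reduces to $b(u,\sigma v)+b(v,\sigma u)=2b(u,\sigma v)=0$ in characteristic $2$. A structural observation I will use repeatedly is that the operator $\sigma+\Id$ is self-adjoint with respect to $b$, so $W:=\im(\sigma+\Id)$ and $F:=\ker(\sigma+\Id)$ are orthogonal complements; in particular $W\subseteq F$ is totally isotropic.

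For (b) I would choose a basis $w_{1},\ldots,w_{D}$ of $W$, extend to a basis of $F$, and lift representatives $c_{1},\ldots,c_{D}$ of $V/F$. Since $F^{\perp}=W$ induces a perfect pairing of $V/F$ with $W$, the $D\times D$ matrix $P_{ij}=b(c_{i},w_{j})$ is invertible. Writing $\sigma c_{j}=c_{j}+\sum_{i}A_{ij}w_{i}$, the rank of $A$ equals $D(\sigma)$. The isometry condition $b(\sigma c_{i},\sigma c_{j})=b(c_{i},c_{j})$ unwinds to symmetry of $PA$ over $\F_{2}$, and a direct computation gives $q(c_{j})=(PA)_{jj}$. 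If $\alpha(\sigma)=0$ then $PA$ has zero diagonal, making it an \emph{alternating} matrix, whose rank must be even. Since $P$ is invertible, $\rank(PA)=\rank A=D(\sigma)$, so $D(\sigma)$ is even, proving (b).

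For the matrices in (d) I would compute $(D,\alpha)$ block by block, using the symplectic basis $e_{1},e_{2},e_{3},e_{4}$ with $b(e_{1},e_{2})=b(e_{3},e_{4})=1$: an $I$-block contributes $(0,0)$, a $J$-block contributes $(1,1)$ because $b(e_{1},Je_{1})=1$, and a direct check on $M$ shows $\rank(M+\Id)=2$ and $b(e_{i},Me_{i})=0$ for all $i$, so an $M$-block contributes $(2,0)$. Since $D$ is additive over orthogonal direct sums and $\alpha$ is the boolean OR of the block contributions, the two families in (d) realize precisely the set of pairs $(D,\alpha)$ with $0\leq D\leq n$ and $\alpha=1$ whenever $D$ is odd. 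Counting these pairs gives $n+\lfloor n/2\rfloor+1$, yielding the closed form in (c) after splitting on the parity of $n$.

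The main work, and the main obstacle, is showing that every involution is conjugate to one of the listed matrices; I would do this by induction on $\dim V$. The easy case is $\alpha(\sigma)=1$: pick $v$ with $b(v,\sigma v)=1$; then $U=\mathrm{span}\{v,\sigma v\}$ is a $\sigma$-invariant symplectic $2$-plane on which $\sigma$ acts as $J$, and because $\sigma$ is an isometry it preserves $U^{\perp}$, to which induction applies. The hard case is $\alpha(\sigma)=0$ with $\sigma\neq\Id$, where I must split off an $M$-block. Here I would reuse the framework from the proof of (b): since $PA$ is a nonzero alternating matrix, after an isometric change of basis I can arrange that a $2\times 2$ subblock of $PA$ equals $\begin{bsmallmatrix}0&1\\1&0\end{bsmallmatrix}$, that the corresponding $4$-dimensional subspace $U=\mathrm{span}\{w_{1},w_{2},c_{1},c_{2}\}$ is nondegenerate and $\sigma$-invariant, and that $\sigma|_{U}$ matches $M$. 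The subtle technical step is performing this change of basis inside $\Iso(V)$ so that $U$ and $U^{\perp}$ are simultaneously $b$-orthogonal and $\sigma$-invariant -- essentially a Gram--Schmidt-style reduction on the structure matrices $(P,A)$ -- after which induction on $U^{\perp}$ finishes the proof, and uniqueness of the conjugacy class for each $(D,\alpha)$ is immediate from distinctness of these invariants across the representatives.
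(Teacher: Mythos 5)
The paper does not actually prove Theorem~\ref{th:main-sp}; it quotes it as a classical result of Aschbacher--Seitz \cite{AS}, so there is no internal argument to compare yours against. Judged on its own terms, most of your outline is sound: the self-adjointness of $\sigma+\Id$, the identification of $W=\im(\sigma+\Id)$ as a totally isotropic subspace with $F=\ker(\sigma+\Id)$ as its orthogonal complement, the proof of (b) via the observation that $PA$ is symmetric with zero diagonal (hence alternating, hence of even rank) when $\alpha(\sigma)=0$, the block computations in (d), and the count in (c) are all correct. The splitting arguments are also workable: in the $\alpha=1$ case the plane $\langle v,\sigma v\rangle$ is nondegenerate and $\sigma$-invariant, and in the $\alpha=0$ case the ``subtle technical step'' you flag does go through, since one can adjust the lift $c_2$ by an element of $F=W^{\perp}$ to arrange $b(c_1,c_2)=0$ without disturbing the pairings against $w_1,w_2$; moreover, since $\alpha$ is the OR over orthogonal summands, the complement of an $M$-block again has $\alpha=0$, so that branch of the recursion stays in the second family.

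The genuine gap is in the $\alpha=1$ branch. After splitting off a $J$-block, the restriction of $\sigma$ to $U^{\perp}$ may well have $\alpha=0$, so your induction only shows that every involution is conjugate to \emph{some} direct sum of $I$, $J$, and $M$ blocks---for instance to $J\oplus M^{\oplus j}\oplus I^{\oplus k}$ with $j\geq 1$, which is not on the list in (d). Distinct block decompositions can share the same pair $(D,\alpha)$: both $J\oplus M$ and $J^{\oplus 3}$ have $(D,\alpha)=(3,1)$. To obtain (a) and (d) you must additionally prove the relation $J\oplus M\sim J^{\oplus 3}$ in $\Sp(6)$ (equivalently, show that when $\alpha(\sigma)=1$ and $D(\sigma)\geq 2$ the hyperbolic plane can be chosen so that the complement still has $\alpha=1$). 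This is a finite, checkable fact, but it is an essential step: without it the representatives are not shown to exhaust the conjugacy classes with a given $(D,\alpha)$, your closing claim that ``uniqueness \dots is immediate from distinctness of these invariants across the representatives'' is circular, and part (a) does not follow.
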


If $V$ is an $n$-dimensional vector space over $\F_2$ and $b$ is a
nondegenerate symmetric bilinear form, then $(V,b)$ is either
symplectic or else it is isomorphic to $(\F_2^n,\cdot)$ where $\cdot$
is the standard dot product (see Proposition~\ref{pr:qspace-classify}).  
So it remains to discuss the latter
setting, which we call {\bf orthogonal\/}.  
It is tempting in this case to call $\Iso(\F_2^n,\cdot)$ an
{\it orthogonal group} and to denote it $O_n$, but this leads to some
trouble.  In characteristic $2$ situations, the theory of symmetric
bilinear forms and the theory of quadratic forms diverge.  Group
theorists, perhaps beginning with \cite{Di}, use ``orthogonal group''
to refer to the automorphisms of a quadratic form---this is different
from the group we need to study here.   So while the papers of Dye
\cite{Dy1,Dy2}, for example, classify conjugacy classes of involutions in orthogonal
groups over fields of characteristic 2, these are not the
orthogonal groups that are relevant to the problem we are trying to
solve.

Let us write $\TO(n)$ for $\Iso(\F_2^n,\cdot)$ and call it the {\bf
  topologists' orthogonal group} (for want of a better name).  It is
precisely the group of $n\times n$ matrices $A$ over $\F_2$ satisfying
$A^TA=I_n$.  Our goal is to describe conjugacy classes of involutions
in $\TO(n)$, for all $n$.  

When $n$ is odd, there is an isomorphism $\TO(n)\iso \Sp(n-1)$ (see
Proposition~\ref{pr:TO-odd}). 
So this case is again handled by Theorem~\ref{th:main-sp}.  
When $n$ is even, there is an isomorphism
\[ \TO(n)\iso M\rtimes \Sp(n-2)
\]
where $M$ is a certain modular representation of $\Sp(n-2)$ sitting
in a non-split short exact sequence
\[ 0 \ra \Z/2 \ra M \ra (\Z/2)^{n-2}\ra 0 \]
with the trivial representation on the left and the standard
representation on the right (see Corollary~\ref{co:sp-sd}).  It 
is this decomposition of $\TO(n)$ that
allows us to analyze the involutions, using the case of the symplectic
group as a starting point.  

When $n$ is even, the group $\TO(n)$ has a strange symmetry.  For $A\in
M_{n\times n}(\F_2)$, let $m(A)$ be the matrix obtained from $A$
by changing all the entries: $0$ changes to $1$, and $1$ changes to
$0$.  We call $m(A)$ the \dfn{mirror} of $A$.  Surprisingly, the
mirror of a matrix in $\TO(n)$ is again in $\TO(n)$.  
This map $m\colon\TO(n)\ra \TO(n)$ is of course not a group homomorphism (it
does not preserve the identity), but it does have the property that
\[ m(A)m(B)=AB \]
for all $A,B\in \TO(n)$.  In particular, if $A$ is an involution in
$\TO(n)$ then
$m(A)$ is also an involution in $\TO(n)$.  
One can also check that if $A$ and $B$ are conjugate inside of
$\TO(n)$ then so are $m(A)$ and $m(B)$.

Let us write $\tD(A)=D(mA)$ and
$\talpha(A)=\alpha(mA)$.  The invariants $D$, $\alpha$, $\tD$,
and $\talpha$ turn out to completely separate 
the  conjugacy classes of involutions.  For this reason, let us
define the \dfn{double-Dickson invariant}  of $A$ by 
\[ DD(A)=[D(A),\alpha(A),\tD(A),\talpha(A)]\in \Z^4.
\]
We will usually call this the \mdfn{$DD$-invariant}, for short.  
It turns out to always be the case that $|D(A)-\tD(A)|\leq 1$ (see
Proposition~\ref{pr:DD-props}).

There are numerous other invariants one can write down, and we give a thorough
discussion of these in Section~\ref{se:invariants}.  But the four invariants 
in $DD(A)$ seem to be the
most efficient way of capturing a complete set.

We next identify three families of involutions in $\TO(2n)$.  To this end,
if $A$ is an $n\times n$ matrix and $B$ is a $k\times k$ matrix let
us write $A\oplus B$ for the $(n+k)\times (n+k)$ block diagonal matrix
\[ \begin{bmatrix} A & O \\ O & B\end{bmatrix}.
\]
Let $I=\begin{bsmallmatrix} 1 & 0 \\ 0 & 1\end{bsmallmatrix}$ and
$J=\begin{bsmallmatrix} 0 & 1\\ 1 & 0 \end{bsmallmatrix}$.

The first family
consists of the matrices
\[ I^{\oplus(n-k)}\oplus J^{\oplus(k)}, \quad 1\leq k\leq n-1, \]
as well as their mirrors.  
The $DD$-invariants are given by
\[ DD\bigl (I^{\oplus(n-k)}\oplus J^{\oplus(k)} \bigr ) = [k,1,k+1,1],
\]
and for the mirrors one simply switches the first two coordinates with
the last two.  Note that there are $2(n-1)$ matrices in this family.   

The second family consists of the matrices 
\[
m\bigl (I^{\oplus(n-k)}\bigr ) \oplus J^{\oplus(k)}, \quad 0\leq k\leq n-1 \]
together with their mirrors.  Here the $DD$-invariants are given by
\[ DD\bigl (m(I^{\oplus(n-k)})\oplus J^{\oplus(k)} \bigr ) = 
\begin{cases} 
[k+1,0,k+1,1] & \text{if $k$ is odd,}\\
[k+1 ,0,k ,1] & \text{if $k$ is even.}
\end{cases}
\]
Note that there are $2n$ matrices in this family.  

Finally, our third family consists of the matrices
\[ m\bigl (I^{\oplus(n-k-1)}\oplus J^{\oplus(k) }\bigr )\oplus J, 
\qquad 1\leq k\leq n-2.
\]
These matrices turn out to be conjugate to their own mirrors, so we do
not include the mirrors this time.  The $DD$-invariants are
\[ DD
\Bigl (
m\bigl (I^{\oplus(n-k-1)}\oplus J^{\oplus(k) }\bigr )\oplus J
\Bigr ) = [k+2,1,k+2,1],
\]
and note that there are $n-2$ matrices in this family.  

Taking the three families together, we have produced $5n-4$ involutions.
One readily checks that all of their $DD$-invariants are different, so
this is a lower bound for the number of conjugacy classes of
involutions.  It turns out that there are no others:

\begin{thm}  Assume $n$ is even.  
\label{th:main-TO}
\begin{enumerate}[(a)]
\item The involutions $A$ and $B$ in $\TO(n)$ are in the same conjugacy
class if and only if $DD(A)=DD(B)$.
\item There are precisely $5n-4$ conjugacy classes of involutions in
$\TO(2n)$, and they are represented by the three families of matrices
\begin{align*}
& I^{\oplus(n-k)}\oplus J^{\oplus(k)}, \quad 1\leq k\leq n-1, \ \text{(together
  with their mirrors)}\\
& m(I^{\oplus(n-k)}) \oplus J^{\oplus(k)}, \quad 0\leq k\leq n-1 
\ \text{(together
  with their mirrors)}\\
& m\bigl (I^{\oplus(n-k-1)}\oplus J^{\oplus(k) }\bigr )\oplus J, \qquad 1\leq k\leq n-2.
\end{align*}
\end{enumerate}
\end{thm}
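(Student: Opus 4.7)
The strategy is to use the semidirect product decomposition $\TO(2n)\iso M\rtimes \Sp(2n-2)$ from Corollary~\ref{co:sp-sd} together with the Ashbacher--Seitz classification of Theorem~\ref{th:main-sp}. Under this decomposition, an involution in $\TO(2n)$ corresponds to a pair $(m,g)$ with $g$ an involution in $\Sp(2n-2)$ and $m\in M$ satisfying $(1+g)m=0$; that is, $m$ lies in the fixed submodule $M^g$. A direct computation of conjugation in the semidirect product shows that $(m_1,g_1)$ and $(m_2,g_2)$ are conjugate in $\TO(2n)$ if and only if $g_1,g_2$ are conjugate in $\Sp(2n-2)$ and, after reducing to a common $g$, the images of $m_1,m_2$ in $M^g/(1+g)M$ lie in a common orbit of the centralizer $Z(g)\subseteq \Sp(2n-2)$. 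Consequently the conjugacy classes of involutions in $\TO(2n)$ are in bijection with
$$\bigsqcup_{[g]}\; \Bigl(M^g/(1+g)M\Bigr)\big/Z(g),$$
the disjoint union taken over conjugacy classes of involutions $g$ in $\Sp(2n-2)$ (including $g=1$).

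The heart of the proof is computing the right-hand side. For each involution class of $g$ in $\Sp(2n-2)$, indexed by $(D,\alpha)$ via Theorem~\ref{th:main-sp}, one uses the short exact sequence $0\to \Z/2\to M\to V\to 0$, with $V=(\Z/2)^{2n-2}$ the standard representation, to compute $M^g/(1+g)M$ via its Tate cohomology long exact sequence. The cohomology of $V$ as a $\langle g\rangle$-module is read off directly from the Jordan block description in Theorem~\ref{th:main-sp}(d); the cohomology of the trivial module $\Z/2$ is well known; and the connecting homomorphism is governed by the non-split extension class of $M$. After this calculation, $Z(g)$ acts on a small $\F_2$-vector space and its orbits can be enumerated.

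Summing the orbit counts over all $(D,\alpha)$ produces the upper bound $5n-4$ on the number of conjugacy classes. One then exhibits explicit lifts of each orbit back to matrices in $\TO(2n)$, recovering the three families given in the statement; a direct computation of $DD$ on each family shows that the $5n-4$ invariant values are pairwise distinct. This furnishes the matching lower bound, proving (b). Part (a) is then immediate: the $DD$-invariant is constant on conjugacy classes and takes $5n-4$ distinct values on $5n-4$ conjugacy classes, so it must completely separate them.

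The main obstacle lies in the middle step: controlling the Tate cohomology of the non-split module $M$ and the action of $Z(g)$ on it. Since $M$ does not decompose as a direct sum of $\langle g\rangle$-submodules compatible with the Jordan form of $g$, the extension class must be tracked carefully through the long exact sequence. In particular, the parity-dependent behavior seen in the second family's $DD$-invariant (where $\tilde D$ equals $k$ for $k$ even and $k+1$ for $k$ odd) must emerge from the interaction between $Z(g)$ and the lift of the trivial $\Z/2$-summand; verifying this delicate point is the technical core. Once the cohomology groups and centralizer orbits are in hand, matching them to the three families and to their $DD$-invariants is largely mechanical.
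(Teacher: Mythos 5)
Your architecture is the same as the paper's: pass to $\TO(2n)\cong M\rtimes\Sp(2n-2)$, observe that involutions are pairs $(m,g)$ with $g^2=\Id$ and $(1+g)m=0$, reduce conjugacy to the orbits of the centralizer $C(g)$ on $M^g/(1+g)M$, sum over the Aschbacher--Seitz classes of $g$, and match the resulting count against explicit families separated by the $DD$-invariant. So I would not call this a different route --- but the middle step, which carries essentially all of the content, is not actually executed, and the tool you propose for it does not suffice.

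Identifying the underlying group $M^g/(1+g)M$ is the easy half: for the standard representatives $g=K\oplus\Id_W$ it is $W\oplus\F_2$ almost by inspection, and a Tate cohomology long exact sequence would reproduce this. What decides the count is the orbit structure of $C(g)$ on that quotient --- concretely, whether the two candidate orbits $\{(w,q(w)):w\neq 0\}$ and $\{(w,q(w)+1):w\neq 0\}$ fuse or stay separate. The paper settles this by a case split on $\alpha(g)$: when $\alpha(g)=0$ the function $h(w,\lambda)=q(w)+\lambda$ is shown to be constant on orbits, so the count is $4$; when $\alpha(g)=1$ an explicit isometry commuting with $g$ is constructed that fuses the two orbits, giving $3$; and when $D(g)$ is maximal (so $W=0$) the count is $2$. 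These values $2/3/4$ are exactly what makes the total equal $5(n-1)+1=5n-4$, matching the lower bound from the three families. Your proposal flags this as ``the technical core'' but supplies no mechanism: the long exact sequence controls the group $M^g/(1+g)M$, not the $C(g)$-action on it, so the $\alpha$-dependence cannot emerge from the extension class alone. (A smaller misattribution: the parity-dependent value of $\tD$ in the second family comes from a direct rank computation on the explicit matrices, not from the semidirect-product analysis.) Until the orbit counts are actually proved, the upper bound $5n-4$ is not established and the argument does not close.
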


\medskip

\begin{example}
The group $\TO(6)$ has $23,\!040$ elements and 752 involutions,
falling into 11 
conjugacy classes.  The
possible $DD$-invariants are

\vspace{0.1in}

\begin{tabular}{cccccc}
$[1,1,2,1]$ & $[2,1,1,1]$ & $[2,1,3,1]$ & $[3,1,2,1]$ \\
$[1,0,0,1]$ & $[0,1,1,0]$  & $[2,0,2,1]$ & $[2,1,2,0]$
& $[3,0,2,1]$ & $[2,1,3,0]$ \\
$[3,1,3,1]$
\end{tabular}

\vspace{0.1in}

\noindent
where the three rows correspond to  the three families of involutions.
Here is a randomly chosen involution,
written side-by-side with its mirror:
\[
A=\begin{bmatrix}
0 & 0 & 1 & 1 & 1 & 0 \\
0 & 0 & 0 & 0 & 0 & 1 \\
1 & 0 & 1 & 1 & 0 & 0 \\
1 & 0 & 1 & 0 & 1 & 0 \\
1 & 0 & 0 & 1 & 1 & 0 \\
0 & 1 & 0 & 0 & 0 & 0
\end{bmatrix} \qquad
m(A)=\begin{bmatrix}
1 & 1 & 0 & 0 & 0 & 1 \\
1 & 1 & 1 & 1 & 1 & 0 \\
0 & 1 & 0 & 0 & 1 & 1 \\
0 & 1 & 0 & 1 & 0 & 1 \\
0 & 1 & 1 & 0 & 0 & 1 \\
1 & 0 & 1 & 1 & 1 & 1
\end{bmatrix}.
\]
The $\alpha$ and $\talpha$ invariants are easiest to read off: one
just looks along the diagonal.  The presence of $1$s along the
diagonal of $A$ implies $\alpha(A)=1$, and the presence of $0$s along the
diagonal of $A$ implies $\talpha(A)=1$ (since these $0$s lead to $1$s
along the
diagonal of $m(A)$).  Notice that this immediately puts $A$ in the
first or third family.  

Next, one readily computes $D(A)=\rank(A+\Id)=3$ and
$\tD(A)=D(mA)=\rank(mA+\Id)=3$.  So $DD(A)=[3,1,3,1]$, which
identifies the appropriate conjugacy class.  
\end{example}

\begin{remark}
One can naturally ask if the results of this paper extend to isometry
groups over other fields of characteristic two.  According to
\cite{AS}, this works fine in the symplectic
case---Theorem~\ref{th:main-sp} does not require that the ground field
be $\F_2$.  The same can therefore be said for the odd-dimensional
orthogonal case, as this case was secretly symplectic.  But for the
even-dimensional orthogonal case, the main methods in this paper
only work when the field is  $\F_2$.  In several places we use
constructions that make sense only
because certain maps that
satisfy $F(\lambda v)=\lambda^2 F(v)$ actually turn out to be linear; 
this  cannot possibly happen over other fields.    
\end{remark}

\subsection{Organization of the paper}
In Section~\ref{se:two} we develop the basics of nondegenerate 
symmetric bilinear forms over $\F_2$ and their isometries.  We define
the mirror operation, and we explore the connections between the
groups $\TO(k)$ and $\Sp(n)$.  Section~\ref{se:invariants}
introduces a slew of invariants for involutions, and establishes their
basic properties.  In Section~\ref{se:main} we prove Theorem~
\ref{th:main-TO}.  Finally, Section~\ref{se:dsum} provides formulas
for how the $DD$-invariant behaves under direct sums; these are very
useful in applications.  Unfortunately  this is the
most tedious part of the paper, as the formulas involve many cases and
are not very enlightening.

\subsection{Acknowledgments}
The author is grateful to Bill Kantor for some extremely helpful
correspondence.


\section{Background}
\label{se:two}

Let $F$ be a field.  By a \dfn{bilinear space} over $F$ we mean a
finite-dimensional 
vector space $V$ together with a nondegenerate symmetric bilinear form $b$ on $V$.
Recall that nondegenerate means  no nonzero vector is orthogonal to every
vector in $V$.  Bilinear spaces are more commonly called {\it quadratic
spaces\/} in the literature, but since the theories of quadratic forms
and bilinear forms diverge in characteristic two the 
terminology chosen here leads to less confusion.  

If $a\in F$ we write $\langle a\rangle$ for the one-dimensional vector
space $F$ equipped with the bilinear form $b(x,y)=axy$.  
We write $H$
for $F^2$, with standard basis $\{e_1,e_2\}$, equipped with the
bilinear form where $b(e_1,e_1)=b(e_2,e_2)=0$ and $b(e_1,e_2)=1$.
Write
$n\langle 1\rangle$ for $\langle 1\rangle\oplus \langle 1\rangle
\oplus \cdots \oplus \langle 1\rangle$ and  $nH$ for $H\oplus H\cdots
\oplus H$ ($n$ summands in each case).  

A  bilinear space $(V,b)$ is called \dfn{symplectic} if $b(v,v)=0$ for all $v$
in $V$.  Any symplectic space is isomorphic to $nH$ for some $n$, by
\cite[Corollary 3.5]{HM}.  The proof is simple: choose any nonzero
$x\in V$, and then choose a $y\in V$ such that $b(x,y)=1$.  Take the
orthogonal complement of $\F_2\langle a,b\rangle$ in $V$ and continue
by induction.  

\begin{prop}
\label{pr:qspace-classify}
Every nondegenerate bilinear space over $\F_2$ is
isomorphic to either $nH$ or
$n\langle 1\rangle$, for some $n\geq 1$.  
\end{prop}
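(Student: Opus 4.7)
The plan is to induct on $\dim V$, splitting into cases according to whether $b$ is symplectic. If $(V,b)$ is symplectic, the result is exactly the fact quoted from \cite{HM}, giving $V \cong nH$. So the work is entirely in the non-symplectic case.

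Suppose then that $b(v_0,v_0) = 1$ for some $v_0 \in V$. Then $\langle v_0\rangle \cong \langle 1\rangle$, and since $b(v_0,v_0)\ne 0$ we have $v_0\notin v_0^\perp$; comparing dimensions (noting $\dim v_0^\perp = \dim V - 1$ because the functional $b(v_0,\blank)$ is nonzero), this gives the orthogonal decomposition $V = \langle v_0\rangle\oplus v_0^\perp$. A standard check shows $b$ restricts to a nondegenerate form on $v_0^\perp$: a radical vector of $v_0^\perp$ would pair trivially with all of $\langle v_0\rangle + v_0^\perp = V$, forcing it to be zero. By induction on dimension, with the trivial base case $\dim v_0^\perp = 0$ handled directly, $v_0^\perp$ is isomorphic to either $m\langle 1\rangle$ or $mH$. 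In the first sub-case $V\cong (m+1)\langle 1\rangle$ and we are done.

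This leaves $V \cong \langle 1\rangle\oplus mH$, which must be rewritten. The key lemma is the characteristic-two identity $\langle 1\rangle \oplus H \cong 3\langle 1\rangle$, which is the one step where the argument truly uses $\F_2$. One verifies it by an explicit change of basis: in $3\langle 1\rangle$ with standard basis $e_1,e_2,e_3$, the vectors $f_1 = e_1+e_2+e_3$, $f_2 = e_1+e_2$, $f_3 = e_1+e_3$ satisfy $b(f_1,f_1)=1$, $b(f_1,f_2)=b(f_1,f_3)=b(f_2,f_2)=b(f_3,f_3)=0$, and $b(f_2,f_3)=1$, exhibiting $3\langle 1\rangle$ as $\langle f_1\rangle \oplus \langle f_2, f_3\rangle \cong \langle 1\rangle \oplus H$. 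Applying this identity $m$ times converts $\langle 1\rangle \oplus mH$ into $(2m+1)\langle 1\rangle$, completing the proof.

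The main obstacle is essentially just noticing the identity above; everything else is routine linear algebra. It is worth remarking that the identity is genuinely characteristic-two specific, since over other fields $3\langle 1\rangle$ and $\langle 1\rangle\oplus H$ are distinguished by their discriminants.
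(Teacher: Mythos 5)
Your proof is correct and follows essentially the same route as the paper: split off orthogonal $\langle 1\rangle$ summands until what remains is symplectic, so that $V\cong k\langle 1\rangle\oplus rH$, and then absorb the hyperbolic planes via the characteristic-two identity $\langle 1\rangle\oplus H\cong 3\langle 1\rangle$, which the paper verifies with the same change of basis read in the opposite direction. The only differences are bookkeeping ones (you run the induction more explicitly and check nondegeneracy of the orthogonal complement), so there is nothing to add.
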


\begin{proof}
Let $(V,b)$ be a nondegenerate bilinear space over $\F_2$.  If $V$ is
symplectic then we are done, so
we may assume that $V$ contains a vector $x_1$ such that
$b(x_1,x_1)=1$.  Take the orthogonal complement of $x_1$ and continue
inductively, until one obtains a space that is symplectic.  
This shows that $V$ is isomorphic to $k\langle 1\rangle
\oplus rH$, for some $k$ and $r$.

We will be done if we can show that $\langle 1\rangle\oplus H\iso
3\langle 1\rangle$, since then if $k\neq 0$ any copy of $H$ in the
decomposition of $V$ can be replaced with $2\langle 1\rangle$.   Suppose that
$x,y,z$ is a basis for a space such that 
\[ b(x,x)=1=b(y,z), \quad b(x,y)=b(x,z)=b(y,y)=b(z,z)=0.
\]
It is easy to check that $x+y$, $x+z$, and $x+y+z$ is an orthonormal
basis for the same space.  
\end{proof}

\begin{remark}
Note that $n\langle 1\rangle$ is simply $\F_2^n$ with the standard dot
product form.  We will usually denote this $(\F_2^n,\cdot)$, and will
write $e_1,\ldots,e_n$ for the standard orthonormal basis.  
\end{remark}

From now on we only work over the field $\F_2$.  
If a bilinear space $(V,b)$ is isomorphic to $nH$ we will
write $\Sp(V)=\Iso(V,b)$.  If $(V,b)$ is isomorphic to
$(\F_2^n,\cdot)$ we say that $V$ is \dfn{orthogonal} and write
$\TO(V)=\Iso(V,b)$.  
We will also use the notation
$\Sp(2n)$ for the group of isometries of $nH$, and  $\TO(n)$ for
the group of isometries of $n\langle 1\rangle$.  Note that we may
identify $\Sp(2n)$ with the usual group of $2n\times 2n$ symplectic
matrices over $\F_2$, and we may identity $\TO(n)$ with the group 
of $n\times n$ matrices $A$ over $\F_2$ such that $AA^T=I_n$.

If $(V,b)$ is a bilinear space over $\F_2$, then $v\mapsto b(v,v)$ gives a
linear map $f\colon V\ra \F_2$.  Note that this depends on the fact that
$\lambda^2=\lambda$ for all $\lambda\in \F_2$.  Since $b$ is
nondegenerate, the adjoint of $b\colon V\tens V\ra \F_2$ is an
isomorphism $V\ra V^*$.  Taking the preimage of $f$ under this
isomorphism, 
we find that
 there is a unique vector
$\Omega\in V$ with the property that
\[ b(\Omega,v)=b(v,v) \quad\text{for all $v\in V$}.
\]
We call $\Omega$ the \dfn{distinguished vector} in $V$.  
Note that when $(V,b)=(\F_2^n,\cdot)$, the distinguished vector is
$[1,1,\ldots,1]$.  The bilinear space $(V,b)$ is symplectic if and
only if $\Omega=0$.

Observe that every isometry of $(V,b)$ must necessarily fix
$\Omega$, and therefore maps $\langle \Omega\rangle^\perp$ into
itself.

\begin{prop}
\label{pr:TO-odd}
When $n$ is odd one has $\TO(n)\iso \Sp(n-1)$.  
\end{prop}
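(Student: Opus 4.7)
The plan is to use the distinguished vector $\Omega$ to split $V = (\F_2^n,\cdot)$ as an orthogonal direct sum, and then exhibit the restriction map to $\Omega^\perp$ as the desired isomorphism.

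First I would compute $b(\Omega,\Omega)$. Since $\Omega = [1,1,\ldots,1]$ in $(\F_2^n,\cdot)$, we have $b(\Omega,\Omega) = n \cdot 1 = 1$ in $\F_2$ because $n$ is odd. In particular $\Omega \notin \Omega^\perp$, so $V = \langle \Omega\rangle \oplus \Omega^\perp$ as bilinear spaces, and the restricted form on the $(n-1)$-dimensional subspace $\Omega^\perp$ is nondegenerate.

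Next I would observe that the restricted form on $\Omega^\perp$ is symplectic: for $v \in \Omega^\perp$ the defining property of $\Omega$ gives $b(v,v) = b(\Omega,v) = 0$. Since $n-1$ is even, Proposition~\ref{pr:qspace-classify} (or rather the symplectic classification cited just before it) identifies $(\Omega^\perp, b|_{\Omega^\perp})$ with $\tfrac{n-1}{2}H$, so its isometry group is $\Sp(n-1)$.

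Now I would define the restriction map $\rho\colon \TO(n) \to \Sp(n-1)$. As noted in the text immediately before the proposition, every isometry $\sigma$ fixes $\Omega$ and hence preserves $\Omega^\perp$, so $\rho(\sigma) := \sigma|_{\Omega^\perp}$ is well-defined and is clearly a homomorphism. Injectivity is immediate from the splitting $V = \langle\Omega\rangle \oplus \Omega^\perp$: if $\sigma$ fixes $\Omega$ and restricts to the identity on $\Omega^\perp$, then $\sigma = \Id$. For surjectivity, given $\tau \in \Sp(\Omega^\perp)$, extend it to $\tilde\tau \in \TO(V)$ by $\tilde\tau(\Omega) = \Omega$ and $\tilde\tau|_{\Omega^\perp} = \tau$; this is an isometry precisely because the decomposition $V = \langle\Omega\rangle \oplus \Omega^\perp$ is orthogonal.

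There is no real obstacle here; the only thing one has to check carefully is that $\Omega \notin \Omega^\perp$ (which uses the hypothesis that $n$ is odd) so that the orthogonal splitting exists, and that the restricted form is symplectic (which is essentially the definition of $\Omega$). Both are one-line verifications.
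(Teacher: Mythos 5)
Your proposal is correct and follows essentially the same route as the paper: decompose $\F_2^n = \langle\Omega\rangle \oplus \Omega^\perp$ (valid since $b(\Omega,\Omega)=1$ when $n$ is odd), note every isometry fixes $\Omega$ and hence preserves $\Omega^\perp$, and identify $\Iso(\Omega^\perp)$ with $\Sp(n-1)$ because the restricted form is symplectic. You simply spell out the injectivity and surjectivity of the restriction map in more detail than the paper does.
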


\begin{proof}
Let $U=\langle \Omega \rangle^{\perp}$.  When $n$ is
odd we have a decomposition $\F_2^n=U\oplus \langle \Omega\rangle$.  Every
element of $\TO(n)$ fixes $\Omega$ and therefore maps $U$ to $U$, so we
have $\TO(n)\iso \Iso(U)$.  But the
space $U$ is symplectic, so $\Iso(U)\iso \Sp(n-1)$.  
\end{proof}

It is easy to count the number of elements in $\TO(n)$.  The following
result is classical, but a nice reference is \cite{M}:

\begin{prop}
\label{pr:orders}
For $n\geq 1$ one has
\[ |\TO(2n)|=2^n\cdot(4^n-4^1)(4^n-4^2)\cdots(4^n-4^{n-1}),
\]
\[ |\Sp(2n)|=|\TO(2n+1)|=2^n\cdot (4^n-4^0)(4^n-4^1)(4^n-4^2)\cdots(4^n-4^{n-1}).
\]
\end{prop}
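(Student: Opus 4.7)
The plan is to prove the formulas for $|\Sp(2n)|$ and $|\TO(2n)|$ by induction on $n$; the identity $|\TO(2n+1)|=|\Sp(2n)|$ is already established in Proposition~\ref{pr:TO-odd}. The base case $n=1$ reduces to the direct checks $|\TO(2)|=2$ (namely $\{I,J\}$) and $|\Sp(2)|=|\GL_2(\F_2)|=6$, both matching the formulas.

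For $|\Sp(2n)|$, I would count ordered symplectic bases $(u_1,v_1,\ldots,u_n,v_n)$ in $nH$, since $\Sp(2n)$ acts simply transitively on these. There are $4^n-1$ choices for the nonzero vector $u_1$, then $2^{2n-1}$ choices for $v_1$ (since $\{v:b(u_1,v)=1\}$ is an affine hyperplane). The subspace $\langle u_1,v_1\rangle$ is nondegenerate, so its orthogonal complement is a symplectic space of dimension $2n-2$, and by induction there are $|\Sp(2n-2)|$ ways to extend to a full symplectic basis. This gives the recursion
\[ |\Sp(2n)|=(4^n-1)\cdot 2^{2n-1}\cdot|\Sp(2n-2)|, \]
and a short algebraic manipulation using $4^n-4^{j+1}=4(4^{n-1}-4^j)$ confirms it matches the product formula.

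For $|\TO(2n)|$, I would apply orbit-stabilizer to the action of $\TO(2n)$ on the set of ``unit vectors'' $U=\{v\in\F_2^{2n}:v\cdot v=1\}$. Over $\F_2$ one has $v\cdot v=\sum v_i$, so $|U|=2^{2n-1}$. For any $e\in U$, the condition $e\cdot e=1\neq 0$ forces $\F_2^{2n}=\langle e\rangle\oplus e^\perp$, so the stabilizer of $e$ is naturally isomorphic to $\Iso(e^\perp)$; and since $e^\perp$ is a nondegenerate bilinear space of odd dimension $2n-1$, Proposition~\ref{pr:qspace-classify} forces it to be orthogonal, whence $\Iso(e^\perp)\cong\TO(2n-1)\cong\Sp(2n-2)$ by Proposition~\ref{pr:TO-odd}. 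Orbit-stabilizer then yields $|\TO(2n)|=2^{2n-1}\cdot|\Sp(2n-2)|$, and the same algebraic manipulation as in the symplectic case produces the stated formula.

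The one genuinely nontrivial input is the transitivity of the $\TO(2n)$-action on $U$, and this is the main obstacle. I would resolve it via Witt's extension theorem for nondegenerate symmetric bilinear forms over $\F_2$: given unit vectors $v,w$, the linear map $\langle v\rangle\to\langle w\rangle$ sending $v$ to $w$ is an isometry of one-dimensional bilinear spaces, and Witt's theorem extends it to a global isometry of $(\F_2^{2n},\cdot)$. Alternatives include a direct case-by-case construction built from reflections $\rho_u(x)=x+(u\cdot x)u$ (for unit $u$) together with a transvection when $v+w$ is isotropic, or simply appealing to \cite{M} for the full statement.
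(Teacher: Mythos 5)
The paper itself offers no proof of this proposition---it simply cites \cite{M}---so any complete argument here is genuinely new relative to the text. Your counting scheme is the right one and the arithmetic checks out: $\Sp(2n)$ acts simply transitively on ordered symplectic bases, giving $|\Sp(2n)|=(4^n-1)\cdot 2^{2n-1}\cdot|\Sp(2n-2)|$, and orbit--stabilizer for the action of $\TO(2n)$ on the $2^{2n-1}$ unit vectors gives $|\TO(2n)|=2^{2n-1}\cdot|\Sp(2n-2)|$; both recursions reproduce the stated products, and the base cases are as you say.

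The gap is exactly where you located it, and your proposed patch does not work as stated. Witt's extension theorem is \emph{false} for symmetric bilinear forms in characteristic $2$: in $(\F_2^3,\cdot)$ the map $e_1\mapsto e_1+e_2+e_3$ is an isometry of one-dimensional subspaces that extends to no element of $\TO(3)$, since $e_1^\perp\iso 2\langle 1\rangle$ while $(e_1+e_2+e_3)^\perp\iso H$ is symplectic. Equivalently, $\TO(2n+1)$ is genuinely \emph{not} transitive on its unit vectors ($\Omega$ is a unit vector fixed by every isometry), so the theorem you invoke would prove a false statement one dimension down. Your fallback also has a bug: for a unit vector $u$ the map $x\mapsto x+(u\cdot x)u$ is not an isometry in characteristic $2$---it changes $x\cdot y$ by $(u\cdot x)(u\cdot y)$---and only the transvections attached to isotropic $u$ preserve the form. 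The transitivity you need is nevertheless true in even dimension, and the clean argument uses only Proposition~\ref{pr:qspace-classify}: for a unit vector $v\in\F_2^{2n}$ the complement $v^\perp$ is nondegenerate of odd dimension $2n-1$, hence orthogonal, hence admits an orthonormal basis; prepending $v$ yields an orthonormal basis of the whole space containing $v$, and the change of basis between two such bases is an element of $\TO(2n)$ carrying $v$ to $w$. (This is also precisely the reason your stabilizer identification is valid.) With that substitution the proof is complete.
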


\subsection{Mirrors}

Let $(V,b)$ be an orthogonal space and assume that $\dim V$ is even.
This condition forces $b(\Omega,\Omega)=0$.  
If $L\colon V\ra V$ is an isometry, define $mL\colon V\ra V$ by
\[ mL(v)=L(v)+b(v,\Omega)\Omega.
\]
We call $mL$ the \dfn{mirror} of $L$.  Clearly $mL$ is still
linear, and it is also still an isometry:
\begin{align*}
b(mL(v),mL(w))&=b(Lv+b(v,\Omega)\Omega,Lw+b(w,\Omega)\Omega)\\
& =b(Lv,Lw)+b(Lv,\Omega)b(w,\Omega)+b(v,\Omega)b(Lw,\Omega)\\
&= b(v,w)+b(v,\Omega)b(w,\Omega)+b(v,\Omega)b(w,\Omega) \\
&= b(v,w).
\end{align*}
In the third equality we used that $L(\Omega)=\Omega$ and so
$b(Lv,\Omega)=b(Lv,L\Omega)=b(v,\Omega)$.  

Of course $m\colon \Iso(V)\ra \Iso(V)$ is not a group homomorphism;
for example, it does not preserve the identity.  But it satisfies the
following curious property:

\begin{prop}
\label{pr:mirror-invo}
If $F,L\in \Iso(V,b)$ then $(mF)(mL)=FL$.  In particular, if $F$ is an
involution then $mF$ is also an involution.  
\end{prop}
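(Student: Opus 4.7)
The plan is a direct computation using the definition $mL(v) = L(v) + b(v,\Omega)\Omega$ together with the three facts already recorded in the text: $L(\Omega) = \Omega$ for any isometry $L$, the identity $b(Lv, \Omega) = b(v, \Omega)$ which follows from that, and $b(\Omega,\Omega) = 0$ since $\dim V$ is even and orthogonal.

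I would first expand $(mF)(mL)(v)$ step by step. Applying $mL$ gives $L(v) + b(v,\Omega)\Omega$, and then applying $mF$ produces
\[
F\bigl(L(v) + b(v,\Omega)\Omega\bigr) + b\bigl(L(v) + b(v,\Omega)\Omega,\ \Omega\bigr)\Omega.
\]
Distribute $F$ through the first term using linearity and $F(\Omega) = \Omega$; distribute $b(\cdot,\Omega)$ through the second term using linearity and the fact that $b(v,\Omega) \in \F_2$ is a scalar. This yields
\[
FL(v) + b(v,\Omega)\,\Omega + b(L v,\Omega)\,\Omega + b(v,\Omega)\,b(\Omega,\Omega)\,\Omega.
\]

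Next I would simplify. The last term vanishes because $b(\Omega,\Omega) = 0$. The middle two terms combine: $b(Lv,\Omega) = b(Lv, L\Omega) = b(v,\Omega)$, so those two contributions are equal and cancel in $\F_2$. What remains is $FL(v)$, giving $(mF)(mL) = FL$ as claimed.

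The "in particular" statement then follows immediately: if $F$ is an involution, apply the identity with $L = F$ to get $(mF)(mF) = FF = \Id$, and we already know from the preceding computation in the text that $mF$ is an isometry, hence $mF$ is an involution. There is no real obstacle here; the only subtlety is remembering to invoke $b(\Omega,\Omega) = 0$, which is why the proposition is stated for even-dimensional orthogonal $V$.
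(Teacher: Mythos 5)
Your computation is correct and is essentially identical to the paper's proof: both expand $(mF)(mL)(v)$ directly and simplify using $F(\Omega)=\Omega$, $b(Lv,\Omega)=b(v,\Omega)$, and $b(\Omega,\Omega)=0$. You are slightly more explicit than the paper in noting that the term $b(v,\Omega)b(\Omega,\Omega)\Omega$ vanishes, but the argument is the same.
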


\begin{proof}
For $v\in V$ we compute that
\begin{align*}
mF(mL(v))=mF\bigl (Lv+b(v,\Omega)\Omega\bigr )&=F\bigl (Lv+b(v,\Omega)\Omega\bigr 
)+b(Lv,\Omega)\Omega\\
&= FL(v)+b(v,\Omega)F(\Omega)+b(Lv,\Omega)\Omega.
\end{align*}
Now use the facts that $F(\Omega)=\Omega$ and 
$b(Lv,\Omega)=b(Lv,L\Omega)=b(v,\Omega)$.  
\end{proof}

\begin{remark}
For $(\F^{2n}_2,\cdot)$, recall that $\Omega=[1,1,\ldots,1]$.  So
$m\colon \TO(2n)\ra \TO(2n)$ is the function that adds $\Omega$ to
each column of a matrix $A\in \TO(2n)$.  Clearly this amounts to
changing every entry in $A$, from a $0$ to a $1$ or from a $1$ to a $0$.
\end{remark}

The following result shows that if two isometries are conjugate, then
their mirrors are also conjugate:

\begin{prop}
\label{pr:mirror-conjugacy}
Suppose $A,P\in \Iso(V,b)$.  Then $m(P A P^{-1})=P\circ 
m(A) \circ P^{-1}$.  
\end{prop}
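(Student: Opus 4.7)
The plan is a direct unfolding of the definition of the mirror operation, using only two facts: every isometry fixes the distinguished vector $\Omega$, and isometries preserve the pairing $b$.

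First I would expand both sides on an arbitrary $v \in V$. On the left,
\[ m(PAP^{-1})(v) = PAP^{-1}(v) + b(v,\Omega)\,\Omega. \]
On the right,
\[ P\circ m(A)\circ P^{-1}(v) = P\bigl( A(P^{-1}v) + b(P^{-1}v,\Omega)\,\Omega \bigr) = PAP^{-1}(v) + b(P^{-1}v,\Omega)\cdot P(\Omega). \]

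Next I would eliminate the discrepancies using the two standing facts. Since $P$ is an isometry, $P(\Omega)=\Omega$, so the $\Omega$-coefficients on the two sides just need to be shown equal. For that I would use that $P^{-1}$ is also an isometry and also fixes $\Omega$: then
\[ b(P^{-1}v,\Omega) = b(P^{-1}v, P^{-1}\Omega) = b(v,\Omega). \]
Substituting, both sides equal $PAP^{-1}(v) + b(v,\Omega)\,\Omega$, giving the identity.

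There is no real obstacle here; the only subtlety is remembering that the naturality of $m$ under conjugation relies crucially on both $P$ and $P^{-1}$ fixing $\Omega$, which is automatic because every isometry fixes $\Omega$ (as noted just before Proposition~\ref{pr:TO-odd}). The calculation is essentially the same style as the verification in Proposition~\ref{pr:mirror-invo}.
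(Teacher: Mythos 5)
Your proposal is correct and follows essentially the same route as the paper: expand both sides on an arbitrary $v$, then use $P(\Omega)=\Omega$ and the isometry property to identify $b(P^{-1}v,\Omega)$ with $b(v,\Omega)$. The only cosmetic difference is that you rewrite $\Omega$ as $P^{-1}\Omega$ in the second slot while the paper applies $P$ to both slots; these are the same step.
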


\begin{proof}
The isometry $m(PAP^{-1})$ is given by
\[ v\mapsto PAP^{-1}(v) + b(v,\Omega)\Omega.
\]
The isometry $P\circ m(A)\circ P^{-1}$ is given by
\[ v \mapsto P\bigl ( AP^{-1}v + b(P^{-1}v,\Omega)\Omega\bigr )=PAP^{-1}v +
b(P^{-1}v,\Omega)P(\Omega).
\]
Now use that $P(\Omega)=\Omega$  and
$b(P^{-1}v,\Omega)=b(v,P\Omega)=b(v,\Omega)$.  
\end{proof}

\subsection{More on the symplectic group}
We first state a simple result that will be needed later:

\begin{prop}
\label{pr:sp-trans}
Let $(V,b)$ be a symplectic bilinear space over $\F_2$.  Then
$\Iso(V,b)$ acts transitively on $V-\{0\}$.
\end{prop}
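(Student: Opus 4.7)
The plan is to show that any nonzero vector of $V$ can be extended to a symplectic basis, and then use this to construct an isometry carrying any prescribed nonzero vector to any other.

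First I would recall that, as noted just before Proposition~\ref{pr:qspace-classify}, every symplectic bilinear space over $\F_2$ admits a symplectic basis $u_1,v_1,\ldots,u_n,v_n$ (with $b(u_i,v_i)=1$ and all other pairings zero). The construction is inductive: given a nonzero $x_1\in V$, nondegeneracy of $b$ yields some $y_1\in V$ with $b(x_1,y_1)=1$; the pair $\{x_1,y_1\}$ spans a hyperbolic plane $H_1$, and the restriction of $b$ to $H_1^\perp$ is again nondegenerate and symplectic (of dimension $\dim V - 2$), so induction applies.

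The key observation I would make explicit is that this construction \emph{starts from any chosen nonzero vector}: given any $x\in V-\{0\}$, we may take $u_1=x$ and then build a symplectic basis $x=u_1,v_1,u_2,v_2,\ldots,u_n,v_n$ for $V$.

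Now given two nonzero vectors $x,y\in V$, extend each to a symplectic basis, say $x=u_1,v_1,\ldots,u_n,v_n$ and $y=u_1',v_1',\ldots,u_n',v_n'$. The linear map $f\colon V\ra V$ defined by $f(u_i)=u_i'$ and $f(v_i)=v_i'$ is a bijection, and it is an isometry since it carries one symplectic basis to another (the form is determined on each basis by the same values). By construction $f(x)=y$, proving transitivity.

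The only step requiring any thought is the inductive extension of an arbitrarily chosen $x$ to a symplectic basis, but this is essentially the proof sketched in the paragraph before Proposition~\ref{pr:qspace-classify}, just with the freedom to pick the initial vector; no genuine obstacle arises.
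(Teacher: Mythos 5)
Your proof is correct. The paper does not actually argue this proposition---it simply remarks that the statement is standard and cites \cite[Lemma 4.14]{D}---so there is no in-paper argument to compare against. Your route is the standard one: the inductive construction of a symplectic basis sketched before Proposition~\ref{pr:qspace-classify} can be started at any prescribed nonzero vector (nondegeneracy supplies a partner $y$ with $b(x,y)=1$, and $V$ splits as the hyperbolic plane $\langle x,y\rangle$ plus its orthogonal complement, which is again nondegenerate and symplectic), and a linear map sending one symplectic basis to another preserves $b$ because the two Gram matrices coincide. This is complete and self-contained, which is arguably preferable to the paper's bare citation.
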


\begin{proof}
This is surely standard.  See \cite[Lemma 4.14]{D} as one source
for a proof.
\end{proof}

If $F$ is a field and $V$ is a vector space,
recall that a quadratic form on $V$ is a function $q\colon V\ra F$ such that 
$q(\lambda v)=\lambda^2 q(v)$ and $q(v+w)-q(v)-q(w)$ is bilinear.  
When $F=\F_2$ one has $\lambda^2=\lambda$ for all scalars, so the
first condition simplifies.

If $(V,b)$ is a symplectic bilinear space then a \dfn{semi-norm} 
is a
quadratic form $q\colon
V\ra F$ such that $q(v+w)=q(v)+q(w)+b(v,w)$ for all $v,w\in V$.  Such
a $q$ cannot be unique: adding any linear form to $q$
gives another semi-norm.  In fact the set of all semi-norms for $b$ is a
torsor for the group $V^*$ of linear forms on $V$.  The only
nontrivial statement in all of this is the assertion that 
a semi-norm exists at
all.  To see this, consider $nH$ with the standard symplectic basis
$\{f_i,g_i\}$.  
Define $q(\sum x_if_i+y_ig_i)=\sum_i x_iy_i$.  One readily checks that
this is a semi-norm.  

Fix a semi-norm $q$ for $(V,b)$, and let $A\in \Sp(V)$.  The function
$V\ra \F_2$ given by $v\mapsto q(v)+q(Av)$ is readily checked to be
linear.  Write $\S_q A$ for this linear functional.  Note that
$\S_q(\Id)$ is zero.  If $q'$ is another semi-norm for $(V,b)$ then
$\S_{q'}A=\S_qA + (q+q')$.

\begin{prop}
\label{pr:Sq}
For any $A,B\in \Sp(V)$ one has $\S_q(AB)=\S_q(B)+\S_q(A)\circ B$.  
\end{prop}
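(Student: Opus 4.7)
The plan is to unwind the definition of $\S_q$ and observe that the claimed identity is just a telescoping sum in characteristic two. The key point is that $\S_q(A)$ is defined pointwise by $(\S_q A)(v) = q(v) + q(Av)$, so both sides of the asserted equation are linear functionals on $V$, and it suffices to verify that they agree on every vector $v \in V$.

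First I would write out the left-hand side by definition: $\S_q(AB)(v) = q(v) + q(ABv)$. Then I would write out the right-hand side by definition:
\[
\S_q(B)(v) + \bigl (\S_q(A) \circ B\bigr )(v) = \bigl (q(v) + q(Bv)\bigr ) + \bigl (q(Bv) + q(A(Bv))\bigr ).
\]
Working over $\F_2$, the two copies of $q(Bv)$ cancel, leaving $q(v) + q(ABv)$, which agrees with the left-hand side.

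There is no real obstacle here; the proof is essentially bookkeeping. The only thing to mention explicitly is that the manipulation is legitimate because we are adding elements of $\F_2$, where every element is its own additive inverse. One could optionally remark that the formula should be read as a cocycle condition for $\S_q$ with respect to the right action of $\Sp(V)$ on $V^*$ induced by $B \cdot \varphi = \varphi \circ B$, but this is not needed for the proof itself.
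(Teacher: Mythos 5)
Your proof is correct and is essentially identical to the paper's: both verify the identity pointwise by expanding $\S_q(AB)(v)=q(v)+q(ABv)$ and inserting (or cancelling) the two copies of $q(Bv)$ using that we work over $\F_2$. Nothing further is needed.
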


\begin{proof}
One simply computes that
\begin{align*}
\S_q(AB)(v)=q(v)+q(ABv)& =q(v)+q(Bv)+q(Bv)+q(ABv)\\
&=(\S_qB)(v)+(\S_qA)(Bv).
\end{align*}
\end{proof}

Let $M_V=V\oplus \F_2$.  Define an action of $\Sp(V)$ on $M_V$ by
\begin{myequation}
\label{eq:action}
 A\cdot (v,\lambda)=\bigl (A(v),(\S_q A)(v)+\lambda\bigr ).  
\end{myequation}
We leave the reader to check that this is indeed a group action, using
Proposition~\ref{pr:Sq}.  Clearly $M_V$ sits in a short exact sequence
$0\ra \F_2 \ra M_V \ra V\ra 0$
where $\F_2$ has the trivial action of $\Sp(V)$ and $V$ has the
standard action.  

If $q'$ is another semi-norm for $(V,b)$ then we get two actions on
$M_V$; let us call them $M_V(q)$ and $M_V(q')$.  These are isomorphic
$\Sp(V)$-spaces, via the isomorphism $M_V(q)\ra M_V(q')$ given by
$(v,\lambda)\mapsto (v,q(v)+q'(v)+\lambda)$.
Recall that $q+q'$ is linear.

\begin{example}
\label{ex:Sp-rep}
It is useful to understand how these constructions look in the
concrete world of matrices.  If $V=nH$ then one possible semi-norm
is $q([x_1,y_1,\ldots,x_n,y_n]=x_1y_1+\cdots+x_ny_n$.  The
representation $M_V$ is a group homomorphism $\Sp(2n)\ra \GL(2n+1)$.
For $n=2$
this is 

\begingroup
\renewcommand*{\arraystretch}{0.9}

\[
\begin{bmatrix} a & b & c & d \\
e & f & g & h \\
i & j & k & l \\
m & n & p & q
\end{bmatrix} \mapsto
\begin{bmatrix} a & b & c & d & 0\\
e & f & g & h &0  \\
i & j & k & l &0 \\
m & n & p & q &0\\
ae+im & bf+jn & cg+kp & dh+lq & 1
\end{bmatrix} 
\]
\endgroup

\noindent
and the pattern for larger $n$ is the evident one.  An industrious
reader can check by hand that this is indeed a group homomorphism, but
it is not obvious from the above formula!
\end{example}

Continue to assume that $(V,b)$ is symplectic, and
now consider the bilinear space $\hat{V}=V\oplus \langle 1\rangle \oplus
\langle 1\rangle$.  We will still write $b$ for the bilinear form on
this larger space.  Let $e$ and $f$ be the two basis elements
corresponding to the two $\langle 1\rangle$ summands, so that
$b(e,e)=b(f,f)=1$, $e,f\in V^{\perp}$, and $b(e,f)=0$.    Note that
$\hat{V}$ is an orthogonal space by
Proposition~\ref{pr:qspace-classify} (as it is certainly not symplectic), and one
readily checks that $e+f$ is the distinguished vector $\Omega$.  
For this reason it will 
be a
little more convenient for us to use the basis $\{\Omega,f\}$ instead of
$\{e,f\}$.  
Note that
$b(\Omega,\Omega)=0$ and $b(\Omega,f)=b(f,f)=1$.  

There is an evident homomorphism $j\colon \Sp(V)\ra \Iso(\hat{V})$.
If $A\in \Sp(V)$ then $j(A)\colon \hat{V}\ra \hat{V}$ fixes $\Omega$ and
$f$, and acts as $A$ on the $V$ summand.  

For $(v,\lambda)\in M_V$ define $\phi_{(v,\lambda)}\colon \hat{V}\ra
\hat{V}$ by
\begin{align*}
&\phi_{(v,\lambda)}(w)=w+b(w,v)\Omega\  \text{for $w$ in $V$}, \\
&\phi_{(v,\lambda)}(\Omega)=\Omega\\
&\phi_{(v,\lambda)}(f)=v+(\lambda+q(v))\Omega+f.
\end{align*} 
It is easy to check that $\phi_{(v,\lambda)}$ is an isometry, and that
$\phi$ gives a group homomorphism
$\phi\colon M_V \ra \Iso(\hat{V})$.
Moreover, if $A\in \Sp(V)$ and $x\in M_V$
then
\[ \phi(A\cdot x)=j(A)\phi(x)j(A)^{-1}.
\]
This verifies that we get a group map $\theta\colon M_V\rtimes
\Sp(V)\ra \Iso(\hat{V})$ by defining $\theta(x,A)=\phi(x)j(A)$.

\begin{prop}
The map $\theta\colon M_V\rtimes \Sp(V)\ra \Iso(\hat{V})$ is an
isomorphism.  
\end{prop}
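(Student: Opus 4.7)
The plan is to show $\theta$ is an injective homomorphism of finite groups whose orders agree, hence an isomorphism. The homomorphism property has already been verified in the discussion preceding the statement, so only injectivity and the order comparison remain.

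For injectivity, suppose $\theta(x, A) = \phi(x) j(A) = \Id$ with $x = (v,\lambda)\in M_V$. Evaluating at $f$ and using that $j(A)$ fixes $f$, we get $\phi(x)(f) = f$; by the defining formula this reads
\[ v + (\lambda + q(v))\Omega + f = f \]
in $\hat V$. Since $V$, $\langle \Omega\rangle$, and $\langle f\rangle$ are linearly independent in $\hat V$, this forces $v = 0$, and then $\lambda = \lambda + q(0) = 0$. Thus $x = 0$, reducing the equation to $j(A) = \Id$, which forces $A = \Id$ because $j(A)$ restricts to $A$ on $V$.

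For the order count, set $\dim V = 2n$, so $\hat V$ has dimension $2n+2$ and $\Iso(\hat V) = \TO(2n+2)$. Proposition~\ref{pr:orders} gives
\[ |\TO(2n+2)| = 2^{n+1}\prod_{i=1}^{n}\bigl(4^{n+1}-4^{i}\bigr) = 2^{(n+1)^2}\prod_{k=1}^{n}(4^{k}-1) \]
after factoring $4^i$ out of each term, and similarly $|\Sp(2n)| = 2^{n^2}\prod_{k=1}^{n}(4^{k}-1)$. Since $|M_V| = 2^{2n+1}$, the semidirect product has order $2^{(n+1)^2}\prod_{k=1}^{n}(4^{k}-1) = |\Iso(\hat V)|$. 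An injection between finite groups of equal order is a bijection, completing the proof. There is no serious obstacle here; the only subtlety is choosing the right evaluation point for injectivity, since testing $\theta(x,A)$ on vectors in $V$ alone leaves $\lambda$ undetermined and testing at $\Omega$ is vacuous, so one must use $f$, which is the vector sensitive to the full $M_V$ factor.
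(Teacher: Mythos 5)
Your proposal is correct and follows essentially the same route as the paper: establish injectivity by evaluating $\phi(x)j(A)$ at $f$ to force $v=0$, $\lambda=0$, and then $A=\Id$, and conclude by comparing orders via Proposition~\ref{pr:orders}. The only difference is that you carry out the order computation explicitly where the paper leaves it as a routine check; your arithmetic is correct.
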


\begin{proof}
Let $\dim V=2n$.  
Using Proposition~\ref{pr:qspace-classify}, 
the bilinear space $\hat{V}$ is isomorphic to
$(\F_2^{2n+2},\cdot)$.  So $\Iso(\hat{V})$ is isomorphic to
$\TO(2n+2)$.   One then readily checks using
Proposition~\ref{pr:orders} that the domain and target of $\theta$
have the same order.  So it suffices to show that $\theta$ is
injective.  

Let $(v,\lambda)\in M_V$ and $A\in \Sp(V)$, and assume that
$\phi(v,\lambda)j(A)=\Id$.  The transformation $j(A)$ fixes $f$, and
$\phi(v,\lambda)$ sends $f$ to $v + (\lambda+q(v))\Omega + f$.  It follows
that $v=0$ and $\lambda+q(v)=0$, which in  turn implies $\lambda=0$.
Therefore $\phi(v,\lambda)=\Id$ and so $j(A)=\Id$, which means $A=\Id$.  
\end{proof}

\begin{cor}
\label{co:sp-sd}
There is a group isomorphism $\TO(2n)\iso M \rtimes \Sp(2n-2)$, where
$M$ is the representation of $\Sp(2n-2)$ on $\F_2^{2n-1}$ described in
Example~\ref{ex:Sp-rep}.
\end{cor}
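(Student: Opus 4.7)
The plan is to deduce the corollary directly from the preceding proposition by specializing the dimension and unpacking what $M_V$ looks like concretely.

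First I would take $V$ to be a symplectic bilinear space of dimension $2n-2$, i.e.\ $V\iso (n-1)H$, so that $\Sp(V)=\Sp(2n-2)$. Then $\hat{V}=V\oplus\langle 1\rangle\oplus\langle 1\rangle$ has dimension $2n$, and it is not symplectic (since $b(e,e)=1$ on the $\langle 1\rangle$ summand). By Proposition~\ref{pr:qspace-classify}, the only nondegenerate bilinear space over $\F_2$ of dimension $2n$ that is not symplectic is $(\F_2^{2n},\cdot)$, so $\hat{V}\iso (\F_2^{2n},\cdot)$ and therefore $\Iso(\hat{V})\iso \TO(2n)$. The preceding proposition then gives
\[
\TO(2n) \iso \Iso(\hat{V}) \iso M_V\rtimes \Sp(V) = M_V\rtimes \Sp(2n-2).
\]

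Next I would identify $M_V$ with the representation of Example~\ref{ex:Sp-rep}. As an $\F_2$-vector space, $M_V=V\oplus\F_2$ has dimension $(2n-2)+1=2n-1$, matching the target of the representation in Example~\ref{ex:Sp-rep}. Fixing the standard symplectic basis $\{f_i,g_i\}$ of $V=(n-1)H$ and the standard semi-norm $q\bigl(\sum x_if_i+y_ig_i\bigr)=\sum x_iy_i$, the action \eqref{eq:action}
\[
A\cdot(v,\lambda) = \bigl(A(v),(\S_q A)(v)+\lambda\bigr)
\]
is precisely the one whose matrix form appears in Example~\ref{ex:Sp-rep}: the top-left $(2n-2)\times(2n-2)$ block is $A$ acting on $V$, the last column picks out $\lambda$, and the bottom row records the linear functional $\S_q A$ (whose coordinates are the quadratic expressions in the entries of $A$ shown in that example). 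Calling this representation $M$, we obtain $M_V\iso M$ as $\Sp(2n-2)$-modules, and substituting into the isomorphism above yields the desired $\TO(2n)\iso M\rtimes \Sp(2n-2)$.

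There is really no obstacle here beyond bookkeeping: the work has all been done in the proposition. The only step requiring any care is verifying that with the chosen semi-norm the matrix description in Example~\ref{ex:Sp-rep} genuinely matches the action \eqref{eq:action}, which is just a matter of reading off the formula $(\S_q A)(v)=q(v)+q(Av)$ in coordinates.
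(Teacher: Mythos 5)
Your proof is correct and follows exactly the route the paper intends: the corollary is the specialization of the preceding proposition to $V=(n-1)H$, using Proposition~\ref{pr:qspace-classify} to identify $\hat{V}$ with $(\F_2^{2n},\cdot)$ and the standard semi-norm to match $M_V$ with the matrix representation of Example~\ref{ex:Sp-rep}. Nothing to add.
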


Recall the mirror operation $m\colon \Iso(\hat{V})\ra \Iso(\hat{V})$.
In view of the isomorphism $\theta$, there should be a corresponding
operation on $M_V\rtimes \Sp(V)$.  To construct this,
define a set map $\tilde{m}\colon M_V\ra M_V$ by
$\tilde{m}(v,\lambda)=(v,\lambda+1)$.  Extend this to a set map
$m\colon M_V\rtimes \Sp(V)\ra M_V\rtimes \Sp(V)$ by
\[ m(x,A)=(\tilde{m}(x),A).
\]

\begin{prop} The diagram
\[ \xymatrix{
M_V\rtimes \Sp(V) \ar[r]^-\theta\ar[d]_m & \Iso(\hat{V}) \ar[d]^m \\
M_V\rtimes \Sp(V) \ar[r]^-\theta & \Iso(\hat{V})
}
\]
is commutative.
\end{prop}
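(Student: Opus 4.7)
The plan is to unwind both compositions applied to an arbitrary element $(x,A) = ((v,\lambda), A) \in M_V \rtimes \Sp(V)$ and show the results agree as isometries of $\hat{V}$. We have
\[ \theta(m(x,A)) = \phi_{(v,\lambda+1)} \, j(A), \qquad m(\theta(x,A)) = m\bigl(\phi_{(v,\lambda)} \, j(A)\bigr),\]
so it suffices to prove two separate identities:
\textbf{(i)} $\phi_{(v,\lambda+1)} = m(\phi_{(v,\lambda)})$, and
\textbf{(ii)} $m(L \circ j(A)) = m(L) \circ j(A)$ for any isometry $L$ of $\hat{V}$.

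For (i), I would just evaluate both sides on the three types of basis vectors in the decomposition $\hat{V} = V \oplus \langle \Omega, f\rangle$. Since $e, f \in V^\perp$ and $\Omega = e+f$, one has $\Omega \in V^\perp$, hence $b(w,\Omega) = 0$ for $w \in V$, and also $b(\Omega,\Omega) = 0$, $b(f,\Omega) = 1$. On vectors $w \in V$ and on $\Omega$, both $\phi_{(v,\lambda)}$ and $\phi_{(v,\lambda+1)}$ agree by the defining formulas, and the correction $b(\,\cdot\,,\Omega)\Omega$ vanishes, so both sides of (i) agree. On $f$, the defining formula gives
\[ \phi_{(v,\lambda+1)}(f) = v + (\lambda+1+q(v))\Omega + f = \phi_{(v,\lambda)}(f) + \Omega, \]
and on the other hand $m(\phi_{(v,\lambda)})(f) = \phi_{(v,\lambda)}(f) + b(f,\Omega)\Omega = \phi_{(v,\lambda)}(f) + \Omega$. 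Hence both sides of (i) agree on $f$ as well.

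For (ii), note that $j(A)$ fixes $\Omega$ (by its definition), and since $j(A)$ is an isometry this gives $b(j(A)u, \Omega) = b(j(A)u, j(A)\Omega) = b(u,\Omega)$ for every $u$. Then
\[ m(L \circ j(A))(u) = L(j(A)u) + b(u,\Omega)\Omega = L(j(A)u) + b(j(A)u,\Omega)\Omega = m(L)(j(A)u), \]
which is exactly (ii). Combining (i) and (ii) gives $\theta(m(x,A)) = m(\phi_{(v,\lambda)})\,j(A) = m(\phi_{(v,\lambda)} j(A)) = m(\theta(x,A))$, proving commutativity of the diagram.

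There is no real obstacle here; the only place a reader might stumble is (i), where one must remember that $\Omega$ lies in $V^\perp$ so that the correction term $b(\,\cdot\,,\Omega)\Omega$ detects only the $f$-component. The rest is a direct unpacking of the definitions together with the fact, used repeatedly throughout the section, that every element of $\Iso(\hat{V})$ fixes $\Omega$.
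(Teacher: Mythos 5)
Your proof is correct and follows essentially the same route as the paper: both arguments just evaluate the two composites on the three types of vectors $w\in V$, $\Omega$, and $f$, using that $\Omega\in V^\perp$, $b(\Omega,\Omega)=0$, $b(f,\Omega)=1$, and that $j(A)$ fixes $\Omega$. Your factorization into the two sub-identities (i) and (ii) is only a cosmetic reorganization of the paper's single direct computation.
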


\begin{proof}
Pick $(v,\lambda)\in M_V$ and $A\in \Sp(V)$.  Then
$m\theta((v,\lambda),A)$ has the following behavior:
\[ \begin{cases}
w\mapsto Aw+b(Aw,v)\Omega \ \  \text{if $w\in V$},\\
\Omega\mapsto \Omega \\
f\mapsto v+(\lambda+q(v)+1)\Omega + f + \Omega.
\end{cases}
\]
By inspection this is the same behavior as $\theta((v,\lambda+1),A)$.  
\end{proof}


\section{Invariants}
\label{se:invariants}

Let $(V,b)$ be a bilinear space over $\F_2$.  In this section we study
various numerical invariants that can be assigned to involutions in
$\Iso(V,b)$, having the property that they are constant on
conjugacy classes.  Our focus is mainly on the case where $(V,b)$ is
orthogonal, but it is convenient to  discuss the symplectic case at the
same time.  

\medskip

Let $\sigma\in \Iso(V,b)$.  Define the \dfn{Dickson invariant} 
\mdfn{$D(\sigma)$} to be the rank of
$\sigma+\Id$.  Note that this is clearly invariant under  conjugacy
in $\GL(V)$, and therefore also under conjugacy in the smaller group
$\Iso(V,b)$.

\begin{prop}
\label{pr:D-half}
For any involution in $\Iso(V,b)$ one has $0\leq D(\sigma)\leq
\frac{\dim V}{2}$.  
\end{prop}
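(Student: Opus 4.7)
The proof does not really need the isometry hypothesis; it is a fact about involutions in characteristic $2$. The plan is to exploit the identity $(\sigma+\Id)^{2}=0$ and apply rank-nullity.

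First I would observe that since we work over $\F_{2}$ and $\sigma^{2}=\Id$, we have
\[
(\sigma+\Id)^{2}=\sigma^{2}+2\sigma+\Id=\sigma^{2}+\Id=0,
\]
so $\sigma+\Id$ squares to zero. This immediately gives the containment
\[
\im(\sigma+\Id)\subseteq \ker(\sigma+\Id).
\]

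Next, I would apply the rank-nullity theorem to $\sigma+\Id\colon V\to V$, obtaining
\[
\dim\ker(\sigma+\Id)=\dim V-\rank(\sigma+\Id)=\dim V-D(\sigma).
\]
Combining this with the containment above yields
\[
D(\sigma)=\dim\im(\sigma+\Id)\leq \dim\ker(\sigma+\Id)=\dim V-D(\sigma),
\]
whence $2D(\sigma)\leq \dim V$, i.e.\ $D(\sigma)\leq \tfrac{\dim V}{2}$. The lower bound $D(\sigma)\geq 0$ is automatic since a rank is non-negative.

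There is no real obstacle here; the only subtlety is the characteristic-$2$ cancellation $(\sigma+\Id)^{2}=0$, which fails in other characteristics (where an involution is diagonalizable and $\sigma+\Id$ need not be nilpotent). As a sanity check, one can also read the bound off the Jordan form mentioned just before the statement: each nontrivial Jordan block $\begin{bsmallmatrix}1 & 1\\ 0 & 1\end{bsmallmatrix}$ contributes $1$ to $D(\sigma)$ and $2$ to $\dim V$, while trivial $1\times 1$ blocks contribute $0$ and $1$ respectively, so the number of nontrivial blocks is at most $\dim V/2$.
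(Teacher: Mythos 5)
Your proof is correct and is essentially identical to the paper's: both expand $(\sigma+\Id)^{2}=0$ to get $\im(\sigma+\Id)\subseteq\ker(\sigma+\Id)$ and then apply rank--nullity. The Jordan-form sanity check is a nice bonus but not needed.
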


\begin{proof}
Observe that $(\Id+\sigma)^2=0$, or equivalently 
$\im(\Id+\sigma)\subseteq \ker(\Id+\sigma)$.  So
\[ D(\sigma)=\dim \im(\Id+\sigma)\leq \dim \ker(\Id+\sigma)=\dim
V-D(\sigma).
\]
\end{proof}

The map $F_\sigma\colon V\ra \F_2$ given by $v\mapsto b(v,\sigma v)$
is linear, 
since
\[ F_\sigma(v+w)=b(v+w,\sigma v+\sigma w)=b(v,\sigma v)+b(w,\sigma
w)+b(w,\sigma v)+b(v,\sigma w)
\]
and $b(w,\sigma v)=b(\sigma w,\sigma^2 v)=b(\sigma w,v)$.  Note that
here we have used both that $\sigma$ is an involution and an isometry.
Define the \mdfn{$\alpha$-invariant} \mdfn{$\alpha(\sigma)$} to be the
rank of $F_\sigma$.  Deconstructing this, we have
\[ \alpha(\sigma)=
\begin{cases}
1 & \text{if there exists a $v\in V$ such that $b(v,\sigma v)=1$},\\
0 & \text{otherwise.}
\end{cases}
\]
It is easy to check that the $\alpha$-invariant is constant on
conjugacy classes in $\Iso(V,b)$.  

As we saw in Theorem~\ref{th:main-sp}, it is a classical result that the
pair $(D(\sigma),\alpha(\sigma))$ completely separates conjugacy classes
when $(V,b)$ is symplectic.  So let us now focus on the case where
$(V,b)$ is orthogonal.  Here we may use the mirror operation $m\colon
\Iso(V,b)\ra \Iso(V,b)$, which we know sends involutions to
involutions (Proposition~\ref{pr:mirror-invo}) 
and preserves the conjugacy relation 
(Proposition~\ref{pr:mirror-conjugacy}).  If $\sigma\in
\Iso(V,b)$ is an involution define
\[ \tD(\sigma)=D(m\sigma),\qquad \talpha(\sigma)=\alpha(m\sigma).
\]
Moreover, define the \dfn{double Dickson invariant} (or
\mdfn{$DD$-invariant}, for short) to be the
$4$-tuple
\[
DD(\sigma)=[D(\sigma),\alpha(\sigma),\tD(\sigma),\talpha(\sigma)]\in
\N \times \Z/2\times \N\times \Z/2.
\]
This $4$-tuple is constant on conjugacy classes.  

Recall that $(m \sigma)(v)=\sigma(v)+b(v,v)\Omega$.  Then 
$b(v,(m\sigma)(v))=b(v,\sigma(v))+b(v,v)$.
Therefore 
\[ \talpha(\sigma)=
\begin{cases}
1 & \text{if there exists $v\in V$ such that
  $b(v,\sigma(v))=b(v,v)+1$,} \\
0 & \text{otherwise.}
\end{cases}
\]

\begin{remark}
\label{re:alpha}
Suppose that $e_1,\ldots,e_n$ is an orthonormal basis for $V$, and
that $A$ is the matrix of $\sigma$ with respect to this basis.  Then
one readily checks that
\[ \alpha(\sigma)=\begin{cases}
1 & \text{if $A$ has at least one $1$ on its diagonal,}\\
0 & \text{otherwise},
\end{cases}
\]
and
\[ \talpha(\sigma)=\begin{cases}
1 & \text{if $A$ has at least one $0$ on its diagonal,}\\
0 & \text{otherwise}.
\end{cases}
\]
\end{remark}

\begin{prop}
\label{pr:DD-props}
Let $\sigma$ be an involution in $\Iso(V,b)$, where $(V,b)$ is
orthogonal.  Then $|D(\sigma)-\tD(\sigma)|\leq 1$, and 
one cannot have $\alpha(\sigma)=\talpha(\sigma)=0$.
\end{prop}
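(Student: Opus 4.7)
The plan is to reduce both assertions to a single identity comparing $m\sigma$ to $\sigma$. Since the paper has already shown that the map $f\colon V\to \F_2$, $f(v)=b(v,v)$, is $\F_2$-linear, and since $(m\sigma)(v)=\sigma(v)+b(v,v)\Omega=\sigma(v)+f(v)\Omega$, we get the operator identity
\[
m\sigma+\Id \;=\; (\sigma+\Id) + R, \qquad R(v)=f(v)\Omega.
\]
Here $R$ factors through $\langle\Omega\rangle$, so $\rank R\leq 1$. This is the key structural observation.

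For the bound $|D(\sigma)-\tD(\sigma)|\leq 1$, I would just invoke the standard fact that if two linear maps differ by an operator of rank at most $1$, then their ranks differ by at most $1$. Applied to the identity above, this gives $|\rank(m\sigma+\Id)-\rank(\sigma+\Id)|\leq 1$, which is exactly the desired inequality. (There is nothing to obstruct this step; it is immediate.)

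For the statement that $\alpha(\sigma)$ and $\talpha(\sigma)$ cannot both vanish, I would unpack the two definitions. Using $b(v,(m\sigma)(v))=b(v,\sigma v)+f(v)\cdot b(v,\Omega)=b(v,\sigma v)+b(v,v)^2=b(v,\sigma v)+b(v,v)$ (the last step uses $x^2=x$ in $\F_2$), the hypothesis $\talpha(\sigma)=0$ translates to $b(v,\sigma v)=b(v,v)$ for all $v\in V$. Combined with $\alpha(\sigma)=0$, which says $b(v,\sigma v)=0$ for all $v$, this forces $b(v,v)=0$ for all $v\in V$, i.e.\ $(V,b)$ is symplectic. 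This contradicts the standing assumption that $(V,b)$ is orthogonal.

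The main (and really only) obstacle is making sure the small rank-$1$ perturbation lemma is stated cleanly; everything else is a direct unpacking of the definitions already in the paper.
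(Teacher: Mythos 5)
Your proposal is correct and rests on the same two observations as the paper's own proof: that $m\sigma+\Id$ and $\sigma+\Id$ differ by the rank-$\leq 1$ operator $v\mapsto b(v,v)\Omega$ (the paper establishes the resulting rank inequality by hand, comparing the spans $\langle u_1,\ldots,u_n\rangle$ and $\langle u_1+\Omega,\ldots,u_n+\Omega\rangle$ of the columns of $A+\Id$ and $m(A)+\Id$, rather than citing the rank-one perturbation lemma), and that $b(v,(m\sigma)(v))=b(v,\sigma v)+b(v,v)$, so $\alpha(\sigma)=\talpha(\sigma)=0$ would force $b(v,v)=0$ for all $v$, contradicting orthogonality. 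There are no gaps; your basis-free packaging is a clean equivalent of the paper's argument.
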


\begin{proof}
The second statement follows immediately from Remark~\ref{re:alpha}.
For the first, let $e_1,\ldots,e_n$ be an orthonormal basis for $V$,
so that $\Omega=\sum_i e_i$.  Let $A$ be the  matrix for $\sigma$ with
respect to this basis, and let $u_1,\ldots,u_n$ denote the columns of
$A+\Id$.  Then the columns of $m(A)+\Id$ are the vectors $u_i+\Omega$.
Let $U=\F_2\langle u_1,\ldots,u_n\rangle  \subseteq V$, and
$W=\F_2\langle u_1+\Omega,\ldots,u_n+\Omega\rangle \subseteq V$.  Then
$D(\sigma)=\dim U$ and $\tD(\sigma)=\dim W$.  

But $U=\F_2\langle u_1,u_1-u_2,u_1-u_3,\ldots, u_1-u_n\rangle$ and
$W=\F_2\langle u_1+\Omega,u_1-u_2,\ldots,u_1-u_n\rangle$.  It is now clear
that $|\dim U-\dim W|\leq 1$.
\end{proof}

\subsection{Other invariants}
The $DD$-invariant is the main construct that will be used in the rest
of the paper.  However, one can easily write down a multitude of other
invariants for conjugacy classed of involutions.  Our next goal will
be to give a 
thorough exploration of these.  We should say upfront, though,
that the results of this section are not needed for the main
classification result. 
Nevertheless, they merit inclusion here because they shed some
light on the broader story surrounding the $DD$-invariant.  In
addition, they are useful for calculating how the $DD$-invariant
behaves under direct sums (see Theorem~\ref{th:DD-osum} below).

We begin with a naive example.  Given an involution $\sigma$ in
$\Iso(V,b)$, consider the set
\[ S_\sigma=\{v\in V\,|\, b(v,\sigma v)=0\} \subseteq V.
\]
Since $V$ is finite-dimensional over $\F_2$, $S_\sigma$ is finite.  The order
$|S_\sigma|$ is clearly an invariant of $\sigma$: if $f\colon (V,b)\ra
(W,b')$ is an isomorphism of bilinear spaces and $\sigma'$ is an
involution in $\Iso(W,b')$ such that $f\sigma=\sigma' f$, then clearly
$f$ maps $S_\sigma$ bijectively onto $S_{\sigma'}$.  In particular,
applying this when $(W,b')=(V,b)$ shows that $|S_\sigma|$ is an
invariant of the conjugacy class of $\sigma$ in $\Iso(V,b)$.  

At this point it is clear how to generalize.  Any property $P$ of vectors
$v\in V$ that can be expressed entirely in terms of $b$ and $\sigma$
leads to a  set $S_\sigma(P)$ and a conjugacy invariant
$|S_\sigma(P)|$.  One can easily write down three basic instances of
such a $P$, and in the case that $(V,b)$ is orthogonal 
there is one more that is slightly less-evident:
\[ b(v,v)=0, \qquad b(v, \sigma v)=0,\qquad v=\sigma(v), \qquad
v=\sigma(v)+\Omega.  
\]
By taking combinations of these four properties and their negations,
one can make $2^4=16$ different invariants---but only eight of these
turn out to be interesting, as some of the combinations are either  mutually 
inconsistent or duplicate other combinations.  
Restricting now only to the orthogonal case, the following table
introduces eight invariants and shows their values on the 16 conjugacy
classes of involutions in $\TO(8)$ (these numbers were generated by
computer).
For typographical reasons we write $b(x,y)$ as $x\cdot y$ in this table.

\vspace{0.15in}

\begin{tabular}{c||c|c|c||c|c|c||c|c}
$DD$ & $I_1$ & $I_2$ & $I_3$ & $I_4$ & $I_5$ & $I_6$ & $I_7$ & $I_8$ \\
\hline 
 & \parbox{0.3in}{$\scriptstyle{v\cdot v=0}$\\ $\scriptstyle{v=\sigma v}$ \\} 
& \parbox{0.3in}{$\scriptstyle{v\cdot v=0}$\\ $\scriptstyle{v\cdot \sigma v=0}$ \\ $\scriptstyle{v\neq \sigma
  v}$ \\} 
& \parbox{0.3in}{$\scriptstyle{v\cdot v=0}$\\ $\scriptstyle{v\cdot
    \sigma v=1}$ \\} 
& \parbox{0.4in}{$\scriptstyle{v\cdot v=1}$\\ $\scriptstyle{v\cdot
    \sigma v=0}$ \\} 
& \parbox{0.4in}{$\scriptstyle{v\cdot v=1}$\\ $\scriptstyle{v= \sigma
    v}$ \\} 
& \parbox{0.4in}{$\scriptstyle{v\cdot v=1}$\\ $\scriptstyle{v\cdot \sigma v=1}$ \\ $\scriptstyle{v\neq \sigma
  v}$ \\} 
& \parbox{0.4in}{$\scriptstyle{v=\sigma v+\Omega}$\\
  $\scriptstyle{v\cdot v=0}$ \\} 
& \parbox{0.4in}{$\scriptstyle{v=\sigma v+\Omega}$\\
  $\scriptstyle{v\cdot v=1}$ \\} 
\\ 
\hline
0110 & 128 & 0 & 0 & 0 & 128 & 0 & 0 & 0\\
1001 & 128 & 0 & 0 & 128 & 0 & 0 & 0 & 128 \\
1121 & 64  & 0 & 64 & 64 & 64 & 0 & 0 & 0\\
2111 & 64 & 0 & 64 & 64 & 0 &64 & 0 & 64\\
2021 & 64 & 64 & 0 & 128 & 0 & 0 & 64 & 0\\
2120 & 64 & 64 & 0 & 0 & 0 & 128 & 64 & 0\\
2130 & 32 & 96 & 0 & 0 & 32 & 96 & 0 & 0\\
3021 & 32 & 96 & 0 & 128 & 0 & 0 & 0 & 32\\
2131 & 32 & 32 & 64 & 64 & 32 & 32 & 0 & 0 \\
3121 & 32 & 32 & 64 & 64 & 0 & 64 & 0 & 32\\
3131 & 32 & 32 & 64 & 64 & 0 & 64 & 32 & 0\\
3141 & 16 & 48 & 64 & 64 & 16 & 48 & 0 & 0\\
4131 & 16 & 48 & 64 & 64 & 0 & 64 & 0 & 16\\
4041 & 16 & 112 & 0 & 128 & 0 & 0 & 16 & 0\\
4140 & 16 & 112 & 0 & 0 & 0 & 128 & 16 & 0\\
4141 & 16 & 48 & 64 & 64 & 0 & 64 & 16 & 0\\
\end{tabular}

\vspace{0.15in}

Certain properties of these invariants are immediately evident---for
example, the numbers are always even and most of them are powers of
$2$.  
To explain these, note that the
functions $L_1(v)=v\cdot v$, $L_2(v)=v\cdot \sigma(v)$, and
$L_3(v)=v+\sigma(v)$ are all linear.  So the solution spaces to
$L_i(v)=0$ are linear subspaces of $V$, and the solution spaces to
$L_i(v)\neq 0$ are affine subspaces of $V$ when $i=1,2$ and the
complement of a subspace when $i=3$.  Likewise, the solution space to
$L_3(v)=\Omega$ is an affine space.  This clearly implies that all the
invariants are even.  Even more, it shows that except for $I_2$ and
$I_6$ the invariants always yield powers of $2$.  (As we shall see
shortly, $I_2$ and $I_6$ should really be left out of the story
altogether as they can be obtained as
 linear combinations of the other invariants).  

We can push the above idea a little further.  Let $S_1=\{v\in V\,|\,
v\cdot v=0, v\cdot \sigma(v)=0\}$.  This is a linear subspace of $V$,
and $I_1=|S_1|$.
Analogously, let $S_j\subseteq V$ be the subset that defines the
invariant $I_j$.
One readily checks that vector addition gives an action of the group 
$(S_1,+)$ on $S_j$: that is, if $v\in S_1$ and $w\in S_j$ then
$v+w\in S_j$.  Moreover, it is clearly a free action.  This shows that
$I_j$ is always a multiple of $I_1$.

The following proposition summarizes various relations amongst the
$I_j$ invariants and the $DD$-invariant:

\begin{prop} 
\label{pr:I-invs}
Let $(V,b)$ be an orthogonal bilinear space of even dimension, and let
$\sigma$ be an involution in $\Iso(V,b)$.  Write $I_1=I_1(\sigma)$,
etc.  Then 
\begin{enumerate}[(a)]
\item $I_1+I_2+I_3=2^{\dim V-1}=I_4+I_5+I_6$.  
\item $I_5,I_7,I_8\in \{0,I_1\}$.
\item At most one of $I_5$, $I_7$, and $I_8$ is nonzero.
\item $D(\sigma)=\dim V-\log_2(I_1+I_5)$.
\item $\tD(\sigma)=\dim V-\log_2(I_1+I_8)$.
\item $\alpha(\sigma)=0 \iff (\text{$I_4=2^{\dim V-1}$ and
  $I_3=0$})\iff I_4=2^{\dim V-1}$. 
\item $\talpha(\sigma)=0\iff I_3=I_4=0 \iff I_4=0$.  
\item $\log_2(I_1)=\dim V-\max\{D(\sigma),\tD(\sigma)\}$.  
\item $I_3=\begin{cases}
0 & \text{if $\alpha(\sigma)\neq \talpha(\sigma),$}\\
2^{\dim V-2} & \text{if $\alpha(\sigma)=\talpha(\sigma)$}
\end{cases}$ \quad and\quad
 $I_4=\begin{cases}
0 & \text{if $\alpha(\sigma)>\talpha(\sigma)$}, \\
2^{\dim V-1} & \text{if $\alpha(\sigma)<\talpha(\sigma)$}, \\
2^{\dim V-2} & \text{if $\alpha(\sigma)=\talpha(\sigma)$}.  
\end{cases}$
\item $I_5=\begin{cases}
0 & \text{if $D(\sigma)\geq \tD(\sigma)$}, \\
2^{\dim V-1-D(\sigma)} & \text{if $D(\sigma)<\tD(\sigma)$.}
\end{cases}$
\item
$I_7=
\begin{cases}
0 & \text{if $D(\sigma)\neq \tD(\sigma)$}, \\
2^{\dim V-D(\sigma)} & \text{if $D(\sigma)=\tD(\sigma)$.}
\end{cases}$
\item
$I_8=
\begin{cases}
0 & \text{if $D(\sigma)\leq \tD(\sigma)$}, \\
2^{\dim V-1-\tD(\sigma)} & \text{if $D(\sigma)>\tD(\sigma)$.}
\end{cases}$
\end{enumerate}
\end{prop}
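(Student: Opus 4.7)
Each $I_j$ counts the points of a set cut out by one or two conditions of the form $v = \sigma v$, $v = \sigma v + \Omega$, $v \cdot \sigma v \in \{0,1\}$, or $v \cdot v \in \{0,1\}$. The linear functional $L(v) = v \cdot v = v \cdot \Omega$ is nonzero in our orthogonal setting, so $|\{L = 0\}| = |\{L = 1\}| = 2^{\dim V - 1}$. Part (a) is immediate: the defining sets for $I_1, I_2, I_3$ partition $\{L = 0\}$ according to whether $v = \sigma v$, or $v \neq \sigma v$ with $v \cdot \sigma v = 0$, or $v \cdot \sigma v = 1$; the case of $I_4, I_5, I_6$ is analogous.

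The remaining parts rest on a case analysis for $(\Omega \in U,\ \Omega \in W)$, where $U = \im(\sigma + \Id)$ and $W = \im(m\sigma + \Id)$. Using the standard identity $\ker(\sigma + \Id)^\perp = U$ (valid for any involution isometry), the restriction of $L$ to $\ker(\sigma + \Id)$ vanishes if and only if $\Omega \in U$. Consequently $|I_1| = 2^{\dim V - D}$ when $\Omega \in U$ and $|I_1| = 2^{\dim V - D - 1}$ when $\Omega \notin U$, with $|I_5|$ taking the complementary value inside $\ker(\sigma + \Id)$. The same reasoning via $m\sigma$ governs $|I_8|$ through whether $\Omega \in W$, and since $I_7 \cup I_8 = (\sigma + \Id)^{-1}(\Omega)$ is either empty or a coset of $\ker(\sigma + \Id)$, this pins down $|I_7|$. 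The crucial step is to rule out the case $\Omega \notin U$ and $\Omega \notin W$: if $\Omega \notin U$, the restriction of $L$ to $\ker(\sigma + \Id)$ is nonzero, producing a $\sigma$-fixed $v$ with $v \cdot v = 1$; then $(m\sigma + \Id)(v) = (\sigma + \Id)(v) + b(v,v)\Omega = 0 + \Omega = \Omega$, so $\Omega \in W$. With only three cases surviving, each of $|I_1|, |I_5|, |I_7|, |I_8|$ has an explicit formula in terms of $D$ and $\tD$, and comparison with $|\ker(\sigma + \Id)| = 2^{\dim V - D}$ and $|\ker(m\sigma + \Id)| = 2^{\dim V - \tD}$ yields (b)--(e), (h), (j)--(l).

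For (f), (g), (i), I use the linearity of $F_\sigma(v) = b(v, \sigma v)$ and the identity $F_{m\sigma} = F_\sigma + L$. Then $\alpha(\sigma) = 0$ if and only if $F_\sigma \equiv 0$ if and only if $I_3 = I_5 = I_6 = 0$ if and only if $I_4 = 2^{\dim V - 1}$, where the last equivalence uses that a linear functional vanishing on the affine hyperplane $\{L = 1\}$ must vanish on all of $V$; (g) is the symmetric statement with $F_{m\sigma}$ in place of $F_\sigma$. For (i) in the subcase $\alpha = \talpha = 1$, the functionals $L$ and $F_\sigma$ are both nonzero and distinct (since $F_{m\sigma} = F_\sigma + L \neq 0$), hence linearly independent over $\F_2$, so $(L, F_\sigma)\colon V \to \F_2^2$ is surjective with all four fibers of size $2^{\dim V - 2}$; when $\alpha \neq \talpha$, (f)--(g) already force one of $I_3, I_4$ to be zero.

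The single real obstacle is the impossibility-of-Case-D step, which couples the two adjoint-flavored characterizations of $\Omega \in U$ and $\Omega \in W$ through one shared fixed vector of norm $1$. Once that is in hand, everything else is routine bookkeeping over the three surviving cases.
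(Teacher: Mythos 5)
Your proposal is correct, and it reaches the conclusion by a genuinely different mechanism than the paper for the core parts (b), (c), (j)--(l). The paper never introduces $U=\im(\sigma+\Id)$ and $W=\im(m\sigma+\Id)$ or the self-adjointness identity $\ker(\sigma+\Id)^{\perp}=\im(\sigma+\Id)$; instead it proves (b) by an ad hoc translate-of-subspaces argument, proves (c) by an explicit bilinear-form computation deriving $v\cdot w=v\cdot w+1$ from the coexistence of two kinds of special vectors, and then obtains (j)--(l) by bootstrapping through (b)--(h), with the case $D(\sigma)=\tD(\sigma)$ of (k) handled by observing that $\{v\mid v+\sigma v=\Omega\}$ must be nonempty because every involution has a fixed vector. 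Your trichotomy on $(\Omega\in U,\ \Omega\in W)$, together with the exclusion step showing $\Omega\notin U\Rightarrow\Omega\in W$ (via a fixed vector of norm $1$ mapping to $\Omega$ under $m\sigma+\Id$), packages all of this into one computation: it yields (b) and the sharper form of (c) (exactly one of $I_5,I_7,I_8$ is nonzero), gives closed formulas for $I_1,I_5,I_7,I_8$ in each of the three cases, and re-derives $|D(\sigma)-\tD(\sigma)|\leq 1$ (Proposition~\ref{pr:DD-props}) as a byproduct rather than quoting it. The paper's route is more elementary in that it manipulates explicit cosets and never needs the adjoint identity; yours is more conceptual and makes the logical dependencies among (b)--(e), (h), (j)--(l) collapse into a single case check. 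For (f), (g), (i) the two arguments are essentially the same hyperplane-counting idea, though your use of $F_{m\sigma}=F_\sigma+L$ and the linear independence of $L$ and $F_\sigma$ over $\F_2$ is a slightly cleaner way to get the four equal fibers of size $2^{\dim V-2}$ than the paper's intersection-of-affine-hyperplanes argument.
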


\begin{remark}
Parts (d)--(g) of the above proposition show that the $DD$-invariant
is recoverable from the collection of $I_j$-invariants.  Parts
(h)--(l), together with (a), show that the $I_j$-invariants are all
recoverable from the $DD$-invariant.  We have chosen to build the
paper around the $DD$-invariant---as opposed to some other collection
from the list---only because the $DD$-invariant seemed to be the most
accessible.  As it simply amounts to computing the ranks of four
matrices, it is somewhat easier to handle than
the other invariants in the list (though to be frank, all of the
invariants can be computed by linear algebra and so none are
particular difficult).

The relationship between the $I_j$ invariants and the $DD$-invariant
can be summarized as follows, where the arrows indicate that one set
of invariants can be derived from another:
\[
\xymatrixrowsep{1pc}\xymatrixcolsep{3.2pc}\xymatrix{
(I_5,I_7,I_8) \ar[r]^-{\text{(j)-(l)}} & (D,\tD) \ar[l] &
I_4 \ar[r]^-{(i)} & (\alpha,\talpha) \ar[l] \ar[r]^-{(i)} & I_3 \\
&I_1 \ar[r]^-{(h)} & \max\{D,\tD\}.\ar[l]
}
\]
The labels on the arrows refer to the relevant parts of
Proposition~\ref{pr:I-invs}.  Perhaps the only thing that requires
further explanation is the arrow $(I_5,I_7,I_8)\ra (D,\tD)$.  If we
know $I_5$, $I_7$, and $I_8$ then we know how $D$ and $\tD$ compare in
size, and we know the smaller value.  If $D=\tD$ then we therefore
know both, and if one is larger than the other then
Proposition~\ref{pr:DD-props}
 says it is larger by
exactly $1$---so again we know both.  

Observe that knowing $(I_4,I_5,I_7,I_8)$ is equivalent to knowing the
$DD$-invariant.  
\end{remark}

\begin{proof}[Proof of Proposition~\ref{pr:I-invs}]
For convenience we will write $v\cdot w$ for $b(v,w)$ in this proof.
Part (a) is trivial: the disjoint union of $S_1$, $S_2$ and $S_3$ is
the hyperplane defined by $v\cdot v=0$, and $S_4\cup S_5\cup S_6$ is
the affine hyperplane $v\cdot v=1$.

For (b), note that the subspaces defined by $v=\sigma(v)$ and
$v=\sigma(v)+\Omega$ are parallel, and likewise for the subspaces
defined by $v\cdot v=0$ and $v\cdot v=1$.  Linear algebra implies that
if $S_5$ is nonempty then it is a translate of $S_1$, and so in
particular has the same number of elements.  Likewise for $S_7$ and
$S_8$.

For (c) suppose that $I_5>0$ and $I_7>0$.  
Then there exist  vectors $v$
and $w$ such that $v\cdot v=1$, $v=\sigma v$, $\sigma
w=w+\Omega$, and $w\cdot w=0$.  Now compute that
\[ v\cdot w=\sigma v\cdot w=v\cdot \sigma w=v\cdot w+v\cdot \Omega=v\cdot
w+v\cdot v=v\cdot w+1,
\]
which is a contradiction.  The proofs for the pairs $(I_5,I_8)$ and
$(I_7,I_8)$ are entirely similar.

Parts (d) and (e) are trivial, just using the definitions of $D$ and
$\tD$.  

For (f) and (g) we prove the first biconditionals, and return to the
second biconditionals after (i).  
For (f) it is easy to prove that
$\alpha(\sigma)=0$ if and only if $I_1+I_2+I_4=2^{\dim V}$ just using
the definition of $\alpha$.  Then
use (a) to rewrite the latter condition as $I_4-I_3=2^{\dim V-1}$.  But $S_4$ is
contained in the hyperplane $v\cdot v=1$, and so certainly $I_4\leq
2^{\dim V-1}$.  Equality then follows, together with $I_3=0$. 
Similarly, for (g) one easily proves 
$I_1+I_2+I_5+I_6=2^{\dim V}$, but (a) simplifies this to $I_3+I_4=0$.

For (h), note that by (d) and (e) we have
\[ \max\{ D(\sigma),\tD(\sigma)\} = \dim
V-\log_2(I_1+\min\{I_5,I_8\}).
\]
But (c) implies that 
$\min\{I_5,I_8\}=0$.  

Now consider (i).  If $\alpha(\sigma)\neq \talpha(\sigma)$ then at
least one is zero, so by (f) and (g) $I_3=0$.  If $\alpha>\talpha$
then $\talpha=0$ and $\alpha=1$, so $I_4=0$ by (g).  If $\alpha
<\talpha$ then $\alpha=0$ and $\talpha=1$, so $I_4=2^{\dim V-1}$ by
(f).  It remains to analyze what happens when $\alpha=\talpha$.  By
Proposition~\ref{pr:DD-props} this can only happen when they both equal $1$. 
Let $M_0$ and $M_1$ be the affine subspaces of $V$
defined by $x\cdot x=0$ and $x\cdot x=1$, respectively.  Likewise, let
$N_0$ and $N_1$ be the affine subspaces defined by $x\cdot \sigma x=0$
and $x\cdot \sigma x=1$, respectively.  
Linear algebra immediately
implies that if $M_0\cap N_1$ is nonempty then it is a translate of
$M_0\cap N_0$, and likewise for $M_1\cap N_0$.  Note that by
definition $|M_0\cap N_0|=I_1+I_2$, $|M_0\cap N_1|=I_3$, and $|M_1\cap
N_0|=I_4$.  This proves that $I_3,I_4\in\{0,I_1+I_2\}$.  

We will use the fact that $V=N_0\cup N_1=M_0\cup M_1$, and that $N_0$
and $N_1$ are hyperplanes.
The assumption that
$\alpha(\sigma)=1$ says that $M_0$ (and therefore $M_1$) are
also hyperplanes in $V$.  
If $M_1\cap N_0\neq \emptyset$ then $M_0\neq N_0$, and so $M_0\cap
N_1\neq \emptyset$.  Similarly, if $M_0\cap N_1\neq \emptyset$ then
$M_1\cap N_0\neq \emptyset$.  So $I_3\neq 0$ if and only if $I_4\neq
0$.  But we know by (g) that either $I_3\neq 0$ or $I_4\neq 0$, so
they are both nonzero.  Therefore both are equal to $I_1+I_2$.
Finally, since $I_3=I_1+I_2$ it follows from (a) that $I_3=2^{\dim
  V-2}$.  

Observe that (i) immediately yields the second biconditionals in (f)
and (g).

For (j)--(l) we argue as follows.  If $D(\sigma)>\tD(\sigma)$ then by
(d) and (e) $I_5<I_8$.  So $I_8\neq 0$, which implies $I_5=I_7=0$ by
(c).  Also, (b) implies $I_8=I_1$ and so (e) gives $I_8$ in terms of
$\tD(\sigma)$.  

The argument is similar when $D(\sigma)<\tD(\sigma)$.  Finally, in the
case $D(\sigma)=\tD(\sigma)$ we know from (d) and (e) that $I_5=I_8$.
Comparing (d) and (e) to (h), we find $I_5=I_8=0$.  
Since $D(\sigma)=\tD(\sigma)$, it
follows that the subspace $T=\{v\,|\, v+\sigma(v)=\Omega\}$ has the same
dimension as $\{v\,|\, v+\sigma v=0\}$.  But the latter space  is
always nonzero, since any involution has an eigenvector with eigenvalue $1$.  
So $|T|>0$.  But $T$ is the disjoint union of $S_7$ and $S_8$, and we
know $S_8=\emptyset$.  So $I_7=|S_7|=|T|>0$.  By (b) we then have
$I_7=I_1$, and (h) then shows $I_7=2^{\dim V-D(\sigma)}$.  
\end{proof}

\section{Analysis of conjugacy classes}
\label{se:main}

In this section we will prove the main theorem of the paper, giving a
complete description of the conjugacy classes of involutions in
$\TO(2n)$.  The proof proceeds by analyzing involutions in the
semi-direct product $(\Z/2)^{2n-1}\rtimes \Sp(2n-2)$, and obtaining a
count of conjugacy classes here.  Then we produce enough matrices in
$\TO(2n)$ having different $DD$-invariants to know that these
represent all conjugacy classes.

\medskip

\subsection{Involutions in  the semi-direct product}
 Throughout this section we let  $V=nH$ and
$G_V=M_V\rtimes \Sp(V)$.  
 We will denote elements of this
group by $(x,A)$ where $x\in M_V$ and
$A\in \Sp(V)$.  We will write the map $\Sp(V)\ra \End(M_V)$ as $A\mapsto
\tilde{A}$; see (\ref{eq:action}) for the definition of this action.

\begin{prop}\label{pr:G-conj}\mbox{}\par
\begin{enumerate}[(a)]
\item The element $(x,A)$ is an involution in $M_V\rtimes \Sp(V)$
if and only if $A$ is an involution in $\Sp(V)$ and
$(\tilde{A}+\Id)(x)=0$.  
\item If $(x,A)$ is conjugate to $(y,B)$ then $A$ is conjugate to $B$
in $\Sp(V)$. 
\item Suppose $(x,A)$ is an involution and $A$ is conjugate to $B$ in
$\Sp(V)$.  Then there is a $y\in M_V$ such that $(y,B)$ is an
involution and $(x,A)$ is conjugate to $(y,B)$.  
\item $(x,A)$ is conjugate to $(y,A)$ if and only if there exists
$P\in \Sp(V)$ such that $PA=AP$ and $x+\tilde{P}y$ belongs to the
image of $\tilde{A}+\Id$.  
\end{enumerate}
\end{prop}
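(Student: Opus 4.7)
My plan is to grind through the semidirect product multiplication carefully for each part, since the whole statement is really a structural unpacking of how conjugation behaves in $M_V \rtimes \Sp(V)$. The key formula to keep in mind is that multiplication in the semidirect product is given by $(x,A)(y,B) = (x + \tilde{A}(y), AB)$, and a quick computation shows that $(z,P)^{-1} = (\tilde{P^{-1}}(z), P^{-1})$ (using $\lambda = -\lambda$ over $\F_2$). Everything else is a matter of expanding expressions and reading off components.

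For part (a), I would just compute $(x,A)^2 = (x + \tilde{A}(x), A^2)$, which equals $(0, \Id)$ precisely when $A^2 = \Id$ and $(\tilde{A} + \Id)(x) = 0$. Part (b) follows at once from the fact that the projection $M_V \rtimes \Sp(V) \to \Sp(V)$ is a group homomorphism, so it carries conjugate elements to conjugate elements. Part (c) is essentially a single concrete conjugation: if $B = PAP^{-1}$, then conjugating $(x,A)$ by $(0,P)$ gives $(\tilde{P}(x), B)$; this element is automatically an involution, being conjugate to an involution, so we can take $y = \tilde{P}(x)$.

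The main content is in part (d). Here the plan is to compute the general conjugate $(z,P)(y,A)(z,P)^{-1}$ by repeatedly applying the multiplication rule. Expanding, one finds
\[
(z,P)(y,A)(z,P)^{-1} = \bigl(z + \tilde{P}(y) + \widetilde{PAP^{-1}}(z),\, PAP^{-1}\bigr).
\]
Setting this equal to $(x,A)$ forces $PAP^{-1} = A$, i.e., $PA = AP$, in which case the first-coordinate equation becomes
\[
x + \tilde{P}(y) = z + \tilde{A}(z) = (\tilde{A} + \Id)(z),
\]
which is precisely the condition that $x + \tilde{P}(y) \in \im(\tilde{A} + \Id)$. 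Conversely, given any such $P$ and any $z$ realizing $x + \tilde{P}(y)$ as a value of $\tilde{A}+\Id$, the same computation produces an explicit conjugating element $(z,P)$.

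The hardest step will be keeping the bookkeeping straight in (d) — in particular, remembering that $P$ acts on $M_V$ through the twisted action $\tilde{P}$ coming from \eqref{eq:action} (which is \emph{not} just the linear action of $P$ on $V$), and being careful that the $\tilde{A}$ that eventually appears on the right really is the action of $A$ and not something shifted by $\S_q$. Once that is unwound, the remaining computations are mechanical.
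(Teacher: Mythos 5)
Your proposal is correct and follows essentially the same route as the paper: the paper proves (a) by exactly the computation $(x,A)^2=(x+\tilde{A}x,A^2)$ and dismisses (b)--(d) as "similarly straightforward," which is precisely the bookkeeping you carry out. Your expansion of the conjugation formula in (d), including the identification $\widetilde{PA}\circ\widetilde{P^{-1}}=\widetilde{PAP^{-1}}$ and the reduction to $x+\tilde{P}(y)\in\im(\tilde{A}+\Id)$, is accurate.
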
  

\begin{proof}
Part (a) is just the calculation
\[ (x,A)\cdot
(x,A)=(x+\tilde{A}.x,A^2).
\]
Parts (b) through (d) are  similarly straightforward, and left to the reader.  
\end{proof}

The above proposition has the following significance for us.  For each
involution $A\in \Sp(V)$, let $\cS_A$ be the set of conjugacy classes
in $G_V$ that are represented by involutions of the form $(x,A)$.  If
$A$ and $B$ are conjugate involutions in $\Sp(V)$, then
$\cS_A=\cS_B$ by Proposition~\ref{pr:G-conj}(c).  
Moreover, if
$S\subseteq \Sp(V)$
is a set of representatives for the conjugacy classes in $\Sp(V)$ (one
element for each class) then the other parts of
Proposition~\ref{pr:G-conj} imply that we have
bijections
\[ (\text{conjugacy  classes in $G_V$})  \longleftrightarrow
\coprod_{A\in S} \cS_A
\]
and
\begin{myequation}
\label{eq:S_A}
 \cS_A\longleftrightarrow
\Bigl [\ker(\tilde{A}+\Id)/\im(\tilde{A}+\Id) \Bigr
]_{C(A)}
\end{myequation}
where $C(A)$ is the centralizer of $A$ in $\Sp(V)$ and we are
writing $X_{C(A)}$ for the set of orbits of $X$ under $C(A)$.  
Let us unravel the complicated-looking object on the right.  For
$x=(v,\lambda)$ in $M_V$ we have
\begin{align*}
 (\tilde{A}+\Id)(v,\lambda)=(Av,(\S
A)(v)+\lambda)+(v,\lambda)&=(v+Av,(\S A)(v))\\
& = (v+Av,q(v)+q(Av)).  
\end{align*}
This expression equals $(0,0)$ if and only if $v+Av=0$.  
So let us write $\Eig(A)=\{v\in V\,|\, Av+v=0\}$, and let $Z(A)=\Eig(A)\oplus
\F_2$.  
The group $C(A)$ acts on $Z(A)$: if $P\in C(A)$ and $(v,\lambda)\in
Z(A)$ then
\[ P.(v,\lambda)=(Pv,(\S P)(v)+\lambda).
\]
Next, let $B(A)=\{(v+Av,(\S A)(v))\,|\, v\in V\}$ and note that
$B(A)\subseteq Z(A)$.  The action of $C(A)$
on $Z(A)$ preserves $B(A)$: this comes down to the computation that if
$P\in C(A)$ then
\[ \S P + (\S A)\circ P=\S(AP)=\S(PA)=\S A + (\S P)\circ A
\]
by using Proposition~\ref{pr:Sq} twice. 

Let $H(A)=Z(A)/B(A)$, and note that the action of $C(A)$ on $Z(A)$
descends to an action on $H(A)$.  We can restate (\ref{eq:S_A}) as a bijection
\[ \cS_A \longleftrightarrow H(A)_{C(A)}.\]

To proceed further in our analysis, we will make some assumptions on
the involution $A$.  These assumptions at first might seem very
restrictive, but in fact they turn out to cover all cases.  In
particular, the assumptions in part (d) below are awkward---and almost
certainly unnecesssary.  But since they are readily seen to hold in the
cases of interest, it is easier just to make these awkward
assumptions than to somehow try to avoid them.  

\begin{lemma}  
\label{le:mainlem}
Suppose that $V$ decomposes as $V=U \oplus W$ where $W^{\perp}=U$,
and that $A\in \Sp(V)$ is of the form $K\oplus \Id_{W}$ where
$K\colon U\ra U$ is an involution such that
$\im(K+\Id_U)=\ker(K+\Id_U)$.  
Let $\pi_U\colon V\ra U$ and $\pi_W\colon V\ra W$ be the evident
projections.  
Then:
\begin{enumerate}[(a)]
\item There is a bijection $\cS_A\longleftrightarrow (W\oplus
\F_2)_{C(A)}$ where the action of $P\in C(A)$ on $(w,\lambda)$ is
given by
\[
 P.(w,\lambda)=(\pi_W(Pw),\lambda+(\S
P)(w)+(\S A)(u))
\]
where $u$ is any element of $U$ such that
$Au+u=\pi_U(Pw)$.    
\item If $W=0$ there are exactly two elements in $\cS_A$.  
\item If $W\neq 0$ and $\alpha(A)=0$, there are exactly four elements
in $\cS_A$.  
\item Suppose $W\neq 0$ and $\alpha(A)=1$.  Assume further that there
exist $u_1,u_2\in U$ such that $Ku_1=u_2$ and $b(u_1,u_2)=1$.
Then there are exactly three elements
in $\cS_A$.  
\end{enumerate}
\end{lemma}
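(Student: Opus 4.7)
The plan is to use the bijection $\cS_A \longleftrightarrow H(A)_{C(A)}$ established just above the lemma and translate everything in terms of $W \oplus \F_2$.

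For (a), the block structure $A = K \oplus \Id_W$ gives $\Eig(A) = \Eig(K) \oplus W$, and for $v = u + w$ in $U \oplus W$ one computes $(\tilde{A}+\Id)(v,\lambda) = ((K+\Id)u,\, 0,\, (\S K)(u))$, using that $q$ is additive across the orthogonal decomposition $U \perp W$. Thus $B(A) = \{((K+\Id)u,\, 0,\, (\S K)(u)) : u \in U\}$ inside $Z(A) = \Eig(K) \oplus W \oplus \F_2$. The map $\pi \colon Z(A) \to W \oplus \F_2$ sending $(v, w, \lambda) \mapsto (w,\, \lambda + (\S K)(u))$, for any $u$ with $(K+\Id)u = v$, is well-defined because $(\S K)$ vanishes on $\Eig(K)$ (for $e \in \Eig(K)$, $(\S K)(e) = q(e) + q(Ke) = 2q(e) = 0$); it is surjective with kernel exactly $B(A)$, giving $H(A) \cong W \oplus \F_2$. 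Tracing the action through $\pi$ yields the formula stated in (a). Part (b) is immediate: on $(0, \lambda)$ one has $(\S P)(0) = 0$, and the relevant $u$ lies in $\Eig(K)$ where $\S K$ vanishes, so the action is trivial and there are two orbits.

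For (c), introduce $\Psi(w, \lambda) := \lambda + q(w)$. Using $q(Pw) = q(\pi_U(Pw)) + q(\pi_W(Pw))$ (from $U \perp W$) together with $q((K+\Id)u) = (\S K)(u) + b(Ku, u)$, a short computation gives $\Psi(P.(w, \lambda)) = \Psi(w, \lambda) + b(Ku, u)$. Since $b|_W$ is symplectic and $U \perp W$, one checks $\alpha(A) = \alpha(K)$; the hypothesis $\alpha(A) = 0$ then forces $b(Ku, u) = 0$, so $\Psi$ becomes a $C(A)$-invariant. Meanwhile $\Sp(W) \hookrightarrow C(A)$ (as $\Id_U \oplus \Sp(W)$) acts transitively on $W - \{0\}$ by Proposition~\ref{pr:sp-trans}, so every orbit in $(W - \{0\}) \times \F_2$ contains $(w_0, \lambda')$ for a fixed $w_0 \ne 0$, giving at most two such orbits; $\Psi$ distinguishes them. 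The singletons $\{(0, 0)\}$ and $\{(0, 1)\}$ are fixed by $C(A)$ (the computation from (b)) and cannot merge with a nonzero-$w$ orbit, since otherwise invertibility of $P$ would contradict $(0, \mu) \mapsto (0, \mu)$. Total: four orbits.

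For (d), the task is to exhibit a single $P \in C(A)$ that flips $\lambda$ while fixing a chosen $w_0 \ne 0$; combined with $\Sp(W)$-transitivity this merges the two nonzero-$w$ orbits, leaving three in total. Pick $w_0' \in W$ with $b(w_0, w_0') = 1$ (nondegeneracy of $b|_W$). Since $\langle u_1, u_2\rangle$ and $\langle w_0, w_0'\rangle$ are each isomorphic to $H$ and mutually orthogonal, decompose orthogonally $V = \langle u_1, u_2\rangle \oplus U'' \oplus \langle w_0, w_0'\rangle \oplus W''$. Define $P$ as the identity on $U'' \oplus W''$ and on the remaining four-dimensional subspace by
\[ Pu_i = u_i + w_0' \quad (i = 1, 2), \qquad Pw_0 = w_0 + u_1 + u_2, \qquad Pw_0' = w_0'. \]
Routine pairwise inner-product checks show $P$ is an isometry, and $PA = AP$ follows from $A$ swapping $u_1, u_2$ and fixing $w_0, w_0'$. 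Then $\pi_U(Pw_0) = u_1 + u_2 = (K + \Id)u_1$, so $u = u_1$ in the action formula from (a); the invariant computation from (c) gives $\Psi(P.(w_0, 0)) - \Psi(w_0, 0) = b(Ku_1, u_1) = b(u_2, u_1) = 1$, forcing $P.(w_0, 0) = (w_0, 1)$. The main obstacle is finding the explicit $P$ in (d); its form is essentially forced by wanting $\pi_U(Pw_0) = u_1 + u_2$ so that the $\lambda$-jump $b(Ku, u) = 1$ occurs, but one must then check isometry and commutation with $A$. Everything else is systematic bookkeeping once the identification in (a) is in place.
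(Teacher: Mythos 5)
Your proof is correct and follows essentially the same route as the paper: identify $H(A)=Z(A)/B(A)$ with $W\oplus\F_2$, note that $(0,0)$ and $(0,1)$ are fixed points, use $\Sp(W)$-transitivity to bound the remaining orbits by two, separate them with the invariant $\lambda+q(w)$ when $\alpha(A)=0$, and merge them in case (d) with an explicit commuting isometry that is (up to notation) the same $P$ the paper uses. Your general formula $\Psi(P.(w,\lambda))=\Psi(w,\lambda)+b(Ku,u)$ is a mild streamlining that unifies the computations in (c) and (d), but it is not a different argument.
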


\begin{proof}
For (a), the assumptions force $Z(A)=\im(K+\Id_U)\oplus W\oplus
\F_2$ and $B(A)=\{\bigl ((Au+u)\oplus 0,(\S A)(u)\bigr )\,|\,u\in U\}$.  So the quotient
$Z(A)/B(A)$ is clearly isomorphic to $W\oplus \F_2$.  To transplant
the action of $C(A)$ from $Z(A)/B(A)$ to  $W\oplus \F_2$,
let $(w,\lambda)\in W\oplus \F_2$ and consider the formula
\[ P.(0\oplus w,\lambda)=(Pw,(\S P)(w)+\lambda)=\bigl (\pi_U(Pw)\oplus
\pi_W(Pw),(\S P)(w)+\lambda\bigr ).
\]
Since $P\in C(A)$ it is easy to see that
$(A+I)(\pi_U(Pw))=(A+I)(Pw)=0$, 
therefore we can write $\pi_U(Pw)=Au+u$ for some $u\in U$. 
Then $(Au+u\oplus 0,(\S
A)(u))$ is in $B(A)$, and we get
\[ P.(0\oplus w,\lambda)=(\pi_W(Pw),(\S A)(u)+(\S P)(w)+\lambda)
\]
in $Z(A)/B(A)$.  This finishes the proof of (a).

Note that $(0,0)$ and $(0,1)$ in $W\oplus \F_2$ are fixed points for
the action of $C(A)$.  So $|\cS_A|\geq 2$, and one has equality if and
only if $W=0$.  This proves (b).

Let $E_0=\{(w,q(w))\,|\,w\in W-\{0\}\}$ and $E_1=
\{(w,q(w)+1)\,|\,w\in W-\{0\}\}$.  Note that
\[ W\oplus \F_2=\{(0,0)\}\amalg \{(0,1)\}\amalg E_0\amalg
E_1.\]
Assume that $W\neq 0$ and
let $w_1,w_2 \in W$ be any two nonzero elements.  By
Proposition~\ref{pr:sp-trans}
 there
exists a $P\in \Sp(W)$ such that $P(w_1)=w_2$.  
Then  $Q=\Id_U\oplus P$ is an element of $C(A)$, and
\begin{align*} 
Q.(w_1,q(w_1))&=\bigl (Qw_1,(\S Q)(w_1)+q(w_1)+0\bigr )\\
&=(Qw_1,q(w_1)+q(Qw_1)+q(w_1))\\
&=(w_2,q(w_2)).
\end{align*}
The ``$0$'' in the first line appears because $\pi_U(Qw_1)=0$, and so
we may take $u=0$ in the formula for the action given in (a).
Since $Q$ is linear and preserves $(0,1)$ we therefore also get
 $Q.(w_1,q(w_1)+1)=(w_2,q(w_2)+1)$.  These computations show that
the
elements of $E_0$ are all in the same orbit under $C(A)$, and the
elements of $E_1$ also lie in a common orbit.  Therefore, 
$(W\oplus \F_2)_{C(A)}$ has at most four
elements.  Since $(0,0)$ and $(0,1)$ are fixed points, the only
remaining question is whether points from $E_0$ and $E_1$ can ever be
in the same orbit.  

For (c), the important point is that if $\alpha(A)=0$ then 
for all $v\in V$ one has
\begin{myequation}
\label{eq:qA}
 q(Av+v)=q(Av)+q(v)+b(Av,v)=q(Av)+q(v)= (\S A)(v).
\end{myequation}
Consider the set map $h\colon W\oplus \F_2\ra \F_2$ given by
$h(w,\lambda)= q(w)+\lambda$.  Then $h$ is constant on orbits of
$C(A)$: for if $P\in C(A)$ then choose $u\in U$ such that $\pi_U(Pw)=Au+u$
and  calculate
\begin{align*}
 h\bigl (P.(w,\lambda)\bigr )& =h\bigl (\pi_W(Pw),\lambda+(\S P)(w) + (\S
 A)(u)\bigr ) \\
&=q(\pi_W(Pw))+\lambda+q(w)+q(Pw) + q(Au+u)\ \ \ \  (\text{using (\ref{eq:qA})}) \\
&=q\bigl (\pi_W(Pw)\bigr )+\lambda+q(w)+q(Pw) + q\bigl (\pi_U(Pw)\bigr
) \\
&=q(Pw)+\lambda+q(w)+q(Pw)  \\
&=\lambda+q(w). 
\end{align*}
In the second-to-last equality we have used that $q(x+y)=q(x)+q(y)$
when $b(x,y)=0$.
Notice that $h$ maps $E_0$ to $0$ and $E_1$ to $1$.  So points in
$E_0$ and $E_1$ cannot belong to the same orbit, which implies that
$|\cS_A|=4$.

Finally, assume the hypotheses for (d).  Extend $\{u_1,u_2\}$ to a
symplectic basis $\{u_1,u_2,\ldots,u_{2r-1},u_{2r}\}$ of $U$, and
choose a symplectic basis $\{w_1,\ldots,w_{2s}\}$ of $W$.  
Note that $Ku_i \in \langle
u_1,u_2\rangle^{\perp}$ for all $i\geq 3$; this is a consequence of
\[ b(Ku_i,u_1)=b(Ku_i,Ku_2)=b(u_i,u_2)=0 \]
and the parallel equation with the indices $1$ and $2$ switched.
So when $i\geq 3$ we have $Ku_i\in \langle
u_3,u_4,\ldots,u_{2r}\rangle$.  

Define $P\colon V\ra V$ as follows:

\[
\xymatrixrowsep{0.8pc}
\xymatrix{
u_1\mapsto u_1+w_2  & u_2\mapsto u_2+w_2 & u_i\mapsto u_i \
(i\geq 3) \\
w_1\mapsto u_1+u_2+w_1 & w_i\mapsto w_i \ (i\geq 2).
}
\]
It is routine to check that $P$ is an isometry and that it commutes
with $A$ (for the latter, use that $Ku_i\in 
\langle u_3,u_4,\ldots,u_{2r}\rangle$ when
$i\geq 3$).  Note that $\pi_U(Pw_1)=u_1+u_2=A(u_1)+u_1$, and
\begin{align*} 
(\S
P)(w_1)=q(w_1)+q(Pw_1)& =q(w_1)+q(w_1+(u_1+u_2))\\
&=q(w_1)+q(w_1)+q(u_1)+q(u_2)+b(u_1,u_2)\\
&= q(u_1)+q(u_2)+1.
\end{align*}
Now we use the action formula from (a) to compute:
\begin{align*}
P.(w_1,q(w_1)) &= \Bigl (w_1,q(w_1)+(\S P)(w_1) + (\S A)(u_1)\Bigr )
\\
&= \Bigl ( w_1, q(w_1)+q(u_1)+q(u_2)+1+q(u_1)+q(u_2)\Bigr ) \\
&= (w_1,q(w_1)+1).
\end{align*}
This exhibits that points from $E_0$ and $E_1$ are in the same orbit
in $(W\oplus \F_2)_{C(A)}$, therefore we have exactly three orbits.
\end{proof}

\begin{prop}
Let $A\in \Sp(V)$ be an involution.
\begin{enumerate}[(a)]
\item When $A=\Id$, $|\cS_A|=4$.
\item When $D(A)=\frac{\dim(V)}{2}$, $|\cS_A|=2$.  
\item When $\alpha(A)=1$ and $0<D(A)<\frac{\dim(V)}{2}$, $|\cS_A|=3$.  
\item When $\alpha(A)=0$ and $0<D(A)<\frac{\dim(V)}{2}$, $|\cS_A|= 4$.  
\end{enumerate}
\end{prop}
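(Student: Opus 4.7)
The plan is to reduce each case to Lemma~\ref{le:mainlem} by exploiting two observations. First, the discussion following Proposition~\ref{pr:G-conj} shows that $\cS_A$ depends only on the conjugacy class of $A$ in $\Sp(V)$, so we may replace $A$ by any of the standard representatives listed in Theorem~\ref{th:main-sp}(d). Second, each such representative is block-diagonal with respect to a symplectic basis of $V = nH$, which furnishes an orthogonal decomposition $V = U \oplus W$ of precisely the sort required by the lemma.

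Cases (a) and (b) use degenerate decompositions. For (a), $A = \Id$ and $D(A) = 0$; take $U = 0$ and $W = V$. The hypothesis on $K$ is then vacuous, and $\alpha(\Id) = 0$ because $b(v,v) = 0$ for all $v$ in a symplectic space, so Lemma~\ref{le:mainlem}(c) yields $|\cS_A| = 4$. For (b), take $U = V$, $K = A$, and $W = 0$; by Proposition~\ref{pr:D-half} we always have $\im(A + \Id) \subseteq \ker(A + \Id)$, and the equality $D(A) = n$ forces the two subspaces to have the same dimension $n$ and hence to coincide. Lemma~\ref{le:mainlem}(b) then gives $|\cS_A| = 2$.

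For (c) and (d) we read off $U$ from the block structure of the chosen representative. In (c), the representative is $J^{\oplus k} \oplus I^{\oplus(n-k)}$ with $k = D(A)$ and $0 < k < n$; let $U$ be the span of the first $2k$ symplectic basis vectors (the $J$-part) and $W$ the span of the remaining $2(n-k)$ vectors (the $I$-part). Then $K = J^{\oplus k}$ has $D(K) = k = \dim(U)/2$, which as in case (b) forces $\im(K + \Id_U) = \ker(K + \Id_U)$. The two basis vectors of the leading $J$-block play the roles of $u_1, u_2$ demanded by Lemma~\ref{le:mainlem}(d): by the definition of $J$ one has $K u_1 = u_2$, and $b(u_1, u_2) = 1$ because they form a symplectic pair. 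Hence $|\cS_A| = 3$. Case (d) is parallel: $A$ is conjugate to $M^{\oplus(D/2)} \oplus I^{\oplus(n - D)}$, and we take $U$ to be the span of the $M$-part; again $D(K) = D = \dim(U)/2$ so the hypothesis on $K$ holds, and since $\alpha(A) = 0$ Lemma~\ref{le:mainlem}(c) yields $|\cS_A| = 4$. The only real obstacle is purely organizational---to recognize that each involution conjugacy class in $\Sp(V)$ decomposes cleanly into a ``half-rank'' piece $K$ and an identity piece, after which the lemma does all the genuine work.
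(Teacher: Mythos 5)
Your proof is correct and follows exactly the paper's strategy: reduce to the standard conjugacy-class representatives of Theorem~\ref{th:main-sp}(d), exhibit the $(U,W,K)$ decomposition required by Lemma~\ref{le:mainlem} in each case, and apply the relevant part of that lemma. The only difference is that you spell out the decompositions and the verification of the half-rank condition $\im(K+\Id_U)=\ker(K+\Id_U)$, which the paper dismisses as "transparent."
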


\begin{proof}
We have already proven that $|\cS_A|$ depends only on the conjugacy
class of $A$ in $\Sp(V)$.  So it suffices to prove the theorem when
$V=nH$ and $A$ ranges over the particular representatives listed in
Theorem~\ref{th:main-sp}(d).  For each of these matrices it is
transparent that there is a $(U,W,K)$ decomposition satisfying the hypotheses of
Lemma~\ref{le:mainlem}.  Moreover, for the matrices with $\alpha(A)=1$
it is transparent that the hypotheses of Lemma~\ref{le:mainlem}(d)
hold.  So the results follow immediately from Lemma~\ref{le:mainlem}.
\end{proof}

\begin{cor}
\label{co:sd-count}
If $\dim V=2n$ then $G_V$ has $5n+1$ conjugacy classes of involutions.
\end{cor}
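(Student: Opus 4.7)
The plan is a straightforward accounting: invoke the bijection
\[
(\text{conjugacy classes of involutions in $G_V$}) \longleftrightarrow
\coprod_{A\in S} \cS_A,
\]
where $S\subseteq \Sp(V)$ is a set of representatives for the conjugacy
classes of involutions in $\Sp(V)$, and then sum $|\cS_A|$ over $S$
using the previous proposition and Theorem~\ref{th:main-sp}.

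First I would enumerate the elements of $S$ using
Theorem~\ref{th:main-sp}(c): each conjugacy class is determined by the
pair $(D,\alpha)$, where $\alpha\in\{0,1\}$ and $0\leq D\leq n$, subject
to the constraint that $\alpha=1$ whenever $D$ is odd.  Partitioning $S$
according to which case of the previous proposition applies gives four
groups:
(i) the identity ($D=0$, $\alpha=0$), contributing $|\cS_A|=4$;
(ii) classes with $D=n$, contributing $|\cS_A|=2$ each;
(iii) classes with $\alpha=1$ and $0<D<n$, contributing $|\cS_A|=3$ each;
(iv) classes with $\alpha=0$ and $0<D<n$, contributing $|\cS_A|=4$ each.

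Next I would count each group, splitting into the cases $n$ even and
$n$ odd because Theorem~\ref{th:main-sp}(b)--(c) treats them
differently.  When $n$ is even, group (ii) has two classes (both
$\alpha$-values are allowed since $n$ is even), group (iii) has $n-1$
classes, and group (iv) has $n/2-1$ classes (even $D$ with $0<D<n$).
Adding:
\[
4 + 2\cdot 2 + 3(n-1) + 4\bigl(\tfrac{n}{2}-1\bigr) = 5n+1.
\]
When $n$ is odd, group (ii) has one class (only $\alpha=1$ is allowed
since $D=n$ is odd), group (iii) still has $n-1$ classes, and group
(iv) has $(n-1)/2$ classes.  Adding:
\[
4 + 2 + 3(n-1) + 4\cdot\tfrac{n-1}{2} = 5n+1.
\]

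There is no real obstacle here; all the work has been done in the
previous proposition and Theorem~\ref{th:main-sp}.  The only thing one
has to be careful about is the bookkeeping for the $D=n$ case, where
the number of conjugacy classes of $\Sp(V)$ depends on the parity of
$n$: precisely that subtlety is what makes the two parity cases
collapse to the same total $5n+1$.
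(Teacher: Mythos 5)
Your proof is correct and follows essentially the same route as the paper: partition the conjugacy classes of involutions in $\Sp(V)$ according to the four cases of the preceding proposition, count each group using Theorem~\ref{th:main-sp}(c) (splitting on the parity of $n$ for the $D=n$ column), and sum the values of $|\cS_A|$. The arithmetic in both parity cases checks out and matches the paper's tally of $5n+1$.
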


\begin{proof} 
The proof is best explained by first looking at examples.  For $n=5$
and $n=6$ the conjugacy classes of involutions in $\Sp(2n)$ are
indicated by the dots in the following two tables:

\vspace{0.1in}

\begin{tabular}{c|cccccc}
& 0 & 1 & 2 & 3 & 4 & 5\\
\hline
$\alpha=1$ &  & $\bullet$ & $\bullet$ & $\bullet$ & $\bullet$
& $\bullet$ \\
$\alpha=0$ & $\bullet$ && $\bullet$ && $\bullet$
\end{tabular}
\qquad\qquad
\begin{tabular}{c|ccccccc}
& 0 & 1 & 2 & 3 & 4 & 5 & 6 \\
\hline
$\alpha=1$ &  & $\bullet$ & $\bullet$ & $\bullet$ & $\bullet$
& $\bullet$ & $\bullet$  \\
$\alpha=0$ & $\bullet$ && $\bullet$ && $\bullet$ && $\bullet$
\end{tabular}

\vspace{0.1in}
\noindent
(one dot for each conjugacy class).  For each involution $A\in
\Sp(V)$, mark the dot for the
conjugacy class represented by $A$ with $|\cS_A|$; this leads to the
tables

\vspace{0.1in}

\begin{tabular}{c|cccccc}
& 0 & 1 & 2 & 3 & 4 & 5\\
\hline
$\alpha=1$ &  & $\bullet_3$ & $\bullet_3$ & $\bullet_3$ & $\bullet_3$
& $\bullet_2$ \\
$\alpha=0$ & $\bullet_4$ && $\bullet_4$ && $\bullet_4$
\end{tabular}
\qquad
\begin{tabular}{c|ccccccc}
& 0 & 1 & 2 & 3 & 4 & 5 & 6 \\
\hline
$\alpha=1$ &  & $\bullet_3$ & $\bullet_3$ & $\bullet_3$ & $\bullet_3$
& $\bullet_3$ & $\bullet_2$  \\
$\alpha=0$ &$\bullet_4$ && $\bullet_4$ && $\bullet_4$ && $\bullet_2$
\end{tabular}

\vspace{0.1in}
\noindent
Adding up the numbers, there are 26 conjugacy classes of involutions
in $G_V$ when $n=5$, and $31$ conjugacy classes of involutions when
$n=6$.  

The general situation is that the dots in the first row all get
labelled with $3$, except for  column  $n$.
Likewise, the dots in the second row all get labelled with $4$, except
for column $n$.  The dots in column $n$ all get labelled with $2$.
The total of all the labels is therefore
\[ \begin{cases}
3(n-1)+2+4(\tfrac{n+1}{2}) & \text{when $n$ is odd,}\\
3(n-1)+2+4(\tfrac{n}{2}) +2 & \text{when $n$ is even.}
\end{cases}
\]
In both cases the given sum simplifies to  $5n+1$.
\end{proof}


\subsection{Involutions in \mdfn{$\TO(2n)$}}

\begin{prop}
$\TO(2n)$ has $5n-4$ conjugacy classes of involutions.  
\end{prop}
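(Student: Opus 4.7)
The proof will be essentially immediate from the work already done, requiring only a bookkeeping step to combine two earlier results. The plan is as follows.

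First I would invoke the isomorphism of Corollary~\ref{co:sp-sd}, which says $\TO(2n) \cong M \rtimes \Sp(2n-2)$. In the notation of Section~4.1, this is the group $G_V = M_V \rtimes \Sp(V)$ where $V = (n-1)H$ has dimension $2(n-1)$. Conjugacy classes of involutions in $\TO(2n)$ correspond bijectively to conjugacy classes of involutions in $G_V$ under this isomorphism.

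Next I would apply Corollary~\ref{co:sd-count} with dimension $\dim V = 2(n-1)$ in place of $2n$. That corollary states that when $\dim V = 2m$, the group $G_V$ has $5m+1$ conjugacy classes of involutions. Substituting $m = n-1$ yields $5(n-1)+1 = 5n-4$ conjugacy classes of involutions in $G_V$, hence in $\TO(2n)$.

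There is essentially no obstacle here; the real work has been done in Corollary~\ref{co:sd-count}, which was in turn built from Lemma~\ref{le:mainlem} and the analysis of the sets $\cS_A$. The only point to double-check is the index shift in the isomorphism of Corollary~\ref{co:sp-sd} versus the dimension convention in Corollary~\ref{co:sd-count}, which is purely cosmetic. So the proposition follows by a one-line combination of the two earlier results.
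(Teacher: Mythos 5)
Your proposal is correct and is essentially identical to the paper's own proof: the paper likewise combines the isomorphism $\TO(2n)\iso M\rtimes \Sp(2n-2)$ from Corollary~\ref{co:sp-sd} with the count $5(n-1)+1$ from Corollary~\ref{co:sd-count}. The index shift is handled correctly, so there is nothing further to add.
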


\begin{proof}
Recall that $\TO(2n)\iso (\Z/2)^{2n-1}\rtimes \Sp(2n-2)$.  By
Corollary~\ref{co:sd-count}, the number of involutions in the
semi-direct product is $5(n-1)+1$.
\end{proof}

Our next goal is to produce a collection of specific involutions 
in $\TO(2n)$ and
show that they must represent the $5n-4$ conjugacy classes.  
For the following proposition recall that $I=
\begin{bsmallmatrix} 1 &
0 \\ 0 & 1\end{bsmallmatrix}$ and $J=
\begin{bsmallmatrix} 0 &
1 \\ 1 & 0\end{bsmallmatrix}$.

\begin{prop} 
\label{pr:lower}
In $\TO(2n)$ we have the following calculations:
\begin{enumerate}[(a)]
\item $DD\bigl (I^{\oplus (n-k)}\oplus J^{\oplus k}\bigr )=[k,1,k+1,1]$ for
$1\leq k\leq n-1$.  
\item $DD\bigl (m(I^{\oplus(n-k)} )\oplus J^{\oplus k}\bigr )=
\begin{cases} 
[k+1,0,k,1] & \text{if $k$ is even,}\\
[k+1,0,k+1,1] & \text{if $k$ is odd}
\end{cases}$
\par\noindent
for $0\leq k\leq n-1$.  
\item $DD\bigl ( m(I^{\oplus(n-k-1)} \oplus J^{\oplus k}) \oplus J
\bigr )=[k+2,1,k+2,1]$ for $1\leq k\leq n-2$.  
\end{enumerate}
\end{prop}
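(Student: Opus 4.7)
The plan is to verify the four invariants $D$, $\alpha$, $\tilde D$, $\tilde\alpha$ separately for each of the three families. The $\alpha$-invariants follow immediately from Remark~\ref{re:alpha}, and the two Dickson invariants are computed by directly describing the column space of $A+\Id$ and of $mA+\Id$.

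For $\alpha$ and $\tilde\alpha$: in family (a) the $I$-blocks put $2(n-k)\ge 2$ ones on the diagonal while the $J$-blocks put $2k\ge 2$ zeros, giving $\alpha=\tilde\alpha=1$. In family (b) both $m(I^{\oplus(n-k)})$ and $J^{\oplus k}$ have zero diagonal, giving $\alpha=0$ and $\tilde\alpha=1$. In family (c) the block $m(J^{\oplus k})$ contributes $2k\ge 2$ ones and the other blocks contribute zeros, giving $\alpha=\tilde\alpha=1$.

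For $D(A)=\rank(A+\Id)$, parts (a) and (b) are block-diagonal and one simply adds contributions: an $I$-block contributes $0$, a $J$-block contributes $1$, and in (b) the block $m(I^{\oplus(n-k)})$ contributes $1$ because $m(I^{\oplus(n-k)})+\Id$ is the all-ones matrix on that block. Part (c) is the first non-block-diagonal case: writing $A=mB\oplus J$ with $B=I^{\oplus(n-k-1)}\oplus J^{\oplus k}$ and letting $C_0,C_1,\ldots,C_k$ denote the $k+1$ block positions of $B$, a direct inspection shows that the columns of $mB+\Id$ span $\mathrm{span}(\mathbf{1}_{C_0},\mathbf{1}_{C_1},\ldots,\mathbf{1}_{C_k})$, of dimension $k+1$; adding the $1$ contributed by the trailing $J$ block yields $D(A)=k+2$.

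For $\tilde D(A)=\rank(mA+\Id)$ the key observation is that $mA$ is obtained from $A$ by adding $\Omega$ to every column, so the columns of $mA+\Id$ are exactly the columns of $A+\Id$ shifted by $\Omega$. In each family this reduces the calculation to linear dependence among a short list of indicator vectors. The only genuinely subtle case is (b): here the distinct columns of $mA+\Id$ are $\Omega+\mathbf{1}_{C_r}$, one for each block position $C_0,\ldots,C_k$ of $A$, and they satisfy
\[
\sum_{r=0}^{k}(\Omega+\mathbf{1}_{C_r})=(k+1)\Omega+\sum_{r=0}^{k}\mathbf{1}_{C_r}=(k+2)\Omega,
\]
which vanishes precisely when $k$ is even. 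A quick case-check on subsets $S\subseteq\{0,\ldots,k\}$ shows this is the only linear relation, yielding $\tilde D(A)=k$ when $k$ is even and $\tilde D(A)=k+1$ when $k$ is odd. This parity dichotomy is the one real subtlety; everything else is routine block-indicator bookkeeping.
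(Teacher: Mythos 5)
Your proposal is correct and follows essentially the same route as the paper: read off $\alpha$ and $\tilde\alpha$ from the diagonal, and compute $D$ and $\tilde D$ as ranks of $A+\Id$ and $mA+\Id$, with the parity dichotomy in (b) coming from the unique potential relation $\sum_{r}(\Omega+\mathbf{1}_{C_r})=(k+2)\Omega$ among the distinct columns. Your indicator-vector bookkeeping is just a cleaner packaging of the row reduction the paper carries out explicitly, and your identification of (b)'s $\tilde D$ as the one subtle computation matches the paper, which proves only that case and declares the rest similar.
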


\begin{proof}
These are all simple computations.  We only do (b), since the others
are similar (and easier).  Let $A=m(I^{\oplus(n-k)} )\oplus J^{\oplus
  k}$.  Since $A$ only has zeros along its diagonal, $\alpha(A)=0$.
Since $m(A)$ has a $1$ (and in fact, all ones) along its diagonal,
$\talpha(A)=1$.  
The matrices $A+\Id$ and $m(A)+\Id$ have the form
\[ 
\begingroup
\renewcommand*{\arraystretch}{0.8}
\begin{bmatrix}
1 & 1 & \cdots & 1 & \\
\vdots & \vdots && \vdots \\
1 & 1 & \cdots & 1 \\
 &  &  &  & 1 & 1 &  \\
 &  & & & 1 & 1 &  \\
 &  &  &  &  &  & \ddots &  &  &  \\
 &  &  &  &  &  &  & \ddots &  &  \\
 &  &  &  &  &  & && 1 & 1  \\
 &  &  &  &  &  & && 1 & 1  \\
\end{bmatrix}\ \ 
\begin{bmatrix}
&&&& 1 & 1 & \cdots & 1 & 1 & 1  \\
&&&& \vdots & \vdots & &\vdots &\vdots & \vdots \\
&&&& 1 & 1 & \cdots & 1 & 1 & 1  \\
1 & 1 & \cdots & 1 & 0 & 0 & 1 &1 & \cdots & 1\\
1 & 1 & \cdots & 1 & 0 & 0 & 1 & 1& \cdots & 1\\
1 & 1 & \cdots & 1 & 1 & 1 & 0 & 0 & \cdots & 1\\
1 & 1 & \cdots & 1 & 1 & 1 & 0 & 0 & \cdots & 1\\
\vdots & \vdots && \vdots &&&& \ddots & \ddots \\
1 & 1 & \cdots & 1 & 1 & 1 & 1& \cdots & 0 & 0\\
1 & 1 & \cdots & 1 & 1 & 1 & 1& \cdots & 0 & 0\\
\end{bmatrix}
\endgroup
\]
The former clearly has rank $k+1$.  For the latter, row reduce the
matrix by adding row 1 to the bottom $2k$ rows.  This gives a new
matrix where the lower $2k$ rows clearly have rank $k$.  
The question then becomes whether row one
of the matrix is a linear combination of these new lower $2k$ rows.
It is clear that this is the case precisely when $k$ is even.

As an alternative to just doing the rank computations, one can use
Theorem~\ref{th:DD-osum} from the next section (but this is not
really easier).  
\end{proof}

\begin{cor}
\label{co:lower}
The matrices listed in Proposition~\ref{pr:lower}, together with the
mirrors of the matrices in (a) and (b),
represent all the conjugacy classes of involutions in $\TO(2n)$.
\end{cor}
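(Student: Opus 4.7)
The plan is to combine a counting argument with a check that the listed matrices have pairwise distinct $DD$-invariants. The preceding proposition establishes that $\TO(2n)$ has exactly $5n-4$ conjugacy classes of involutions, while a direct count of the matrices in the statement gives $2(n-1) + 2n + (n-2) = 5n-4$. Since the $DD$-invariant is constant on conjugacy classes, producing $5n-4$ matrices with distinct $DD$-invariants will force them into $5n-4$ different classes, which must then exhaust all of them.

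The first step is to record that $m \circ m = \Id$ on $\Iso(V,b)$, so $\tilde D(mA) = D(A)$ and $\tilde\alpha(mA) = \alpha(A)$, giving
\[
DD(mA) = [\tilde D(A),\, \tilde\alpha(A),\, D(A),\, \alpha(A)].
\]
Applying this to Proposition~\ref{pr:lower} produces an explicit $DD$-tuple for each matrix on the list.

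Next I would organize the tuples according to $(\alpha,\tilde\alpha)$, which by Proposition~\ref{pr:DD-props} is one of $(1,1)$, $(0,1)$, or $(1,0)$. Family (a), its mirrors, and family (c) occupy the $(1,1)$ bucket; family (b) occupies $(0,1)$; and the mirrors of (b) occupy $(1,0)$. Within each bucket the subfamilies are cleanly separated by the sign of $D-\tilde D$ (the three possible values $-1$, $+1$, $0$, again by Proposition~\ref{pr:DD-props}), and within each subfamily by the value of $D$ itself. In every case the tuples from Proposition~\ref{pr:lower} can be read off and compared; no two coincide.

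The main obstacle is purely bookkeeping. There is no new mathematical idea: Proposition~\ref{pr:lower} performs all the rank computations, Proposition~\ref{pr:DD-props} rules out the bucket $(0,0)$ and constrains $|D-\tilde D|\le 1$, and the preceding proposition supplies the upper bound $5n-4$ that makes a distinctness check sufficient.
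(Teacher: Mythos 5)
Your proposal is correct and follows essentially the same route as the paper: count the listed matrices (getting $5n-4$), observe that $DD(mA)=[\tD(A),\talpha(A),D(A),\alpha(A)]$, check that all the resulting $DD$-tuples are pairwise distinct, and conclude by comparison with the previously established count of $5n-4$ conjugacy classes. Your bucketing by $(\alpha,\talpha)$ and the sign of $D-\tD$ is just a more organized presentation of the paper's remark that ``a look at the $DD$-invariants reveals that there are no overlaps.''
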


\begin{proof}
By Proposition~\ref{pr:mirror-conjugacy}, 
the $DD$-invariants are constant on conjugacy classes.
Moreover, if $DD(A)=[a,b,c,d]$ then $DD(mA)=[c,d,a,b]$, simply by the
definition.  
A look at the $DD$-invariants that appear in
Proposition~\ref{pr:lower} reveals that there are no overlaps between
parts (a), (b), and (c), even  if one includes the mirrors of the
matrices in (a) and (b).  Now we count.  There are $n-1$ matrices
covered by (a), which becomes $2n-2$ when one includes their mirrors.
There are $n$ matrices covered by (b), becoming $2n$ when one includes
mirrors.
Finally, there are $n-2$ matrices covered by (c).  So the total number
of matrices is
\[ 2n-2+2n+n-2=5n-4,
\]
and these represent distinct conjugacy classes.
\end{proof}

\begin{cor}
Two involutions in $\TO(2n)$ are conjugate if and only if they have
the same $DD$-invariant.
\end{cor}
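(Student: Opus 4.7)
The plan is to derive this corollary as a direct consequence of Corollary~\ref{co:lower} together with the invariance properties already established for $D$, $\alpha$, $\tD$, and $\talpha$. First I would dispose of the forward implication: each of the four components of $DD$ must be seen to be a conjugacy invariant in $\TO(2n)$. The components $D$ and $\alpha$ are visibly invariant under conjugacy in $\Iso(V,b)$ from their definitions (both depend only on the linear-algebraic data of $\sigma+\Id$ and of the linear functional $v\mapsto b(v,\sigma v)$, both of which transform covariantly). For $\tD=D\circ m$ and $\talpha=\alpha\circ m$, I would cite Proposition~\ref{pr:mirror-conjugacy}, which says that $m$ intertwines conjugation by any $P$ with itself; this shows that $m$ descends to a well-defined map on conjugacy classes, so $\tD$ and $\talpha$ are conjugacy invariants as well.

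For the reverse implication, I would invoke Corollary~\ref{co:lower}, which exhibits an explicit list of $5n-4$ involutions representing every conjugacy class in $\TO(2n)$. Proposition~\ref{pr:lower} records the $DD$-invariants of the members of each of the three families, and the $DD$-invariant of any mirrored matrix is obtained by swapping the first two coordinates with the last two (immediate from the definitions of $\tD$ and $\talpha$). A direct inspection of the list of $DD$-tuples so produced confirms that all $5n-4$ are pairwise distinct; this was already noted inside the proof of Corollary~\ref{co:lower}. Consequently the function sending a conjugacy class of involutions in $\TO(2n)$ to the $DD$-invariant of any representative is both well-defined (by the forward direction) and injective (by this count).

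Combining the two halves: if $A$ and $B$ are involutions with $DD(A)=DD(B)$, then each is conjugate to the unique matrix on the list carrying that $DD$-invariant, and so $A$ and $B$ are conjugate to each other.

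The main obstacle for this particular statement is not genuinely technical; all of the substantive work was performed earlier. The heavy lifting happens in the semi-direct product analysis culminating in Corollary~\ref{co:sd-count} (which establishes that there are exactly $5n-4$ conjugacy classes of involutions, using Lemma~\ref{le:mainlem} and the classification of symplectic involutions in Theorem~\ref{th:main-sp}), and in Proposition~\ref{pr:lower} (which realizes that many distinct $DD$-invariants by concrete matrices). Once both totals are matched, the separation statement reduces to a cardinality argument.
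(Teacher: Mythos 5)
Your proposal is correct and follows exactly the paper's route: the paper deduces this corollary immediately from Proposition~\ref{pr:lower} and Corollary~\ref{co:lower}, i.e., from the fact that the $5n-4$ listed representatives exhaust the conjugacy classes and carry pairwise distinct $DD$-invariants, with invariance of $DD$ under conjugation supplied by Proposition~\ref{pr:mirror-conjugacy}. You have simply made explicit the cardinality argument that the paper leaves as ``immediate.''
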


\begin{proof}
Immediate from Proposition~\ref{pr:lower} and Corollary~\ref{co:lower}.
\end{proof}

The results in this section together constitute a proof of
Theorem~\ref{th:main-TO} from the introduction.


\section{The $DD$-invariant and direct sums}
\label{se:dsum}

Suppose that $(U,b_U)$ and $(W,b_W)$ are two bilinear spaces over
$\F_2$, and $\sigma\in \Iso(U)$ and $\theta\in \Iso(W)$ are two
involutions.  It is natural to ask how the conjugacy class of the involution
$\sigma\oplus \theta\colon U\oplus W\ra U\oplus W$ depends on the
conjugacy classes of $\sigma$ and $\theta$.  Answering this is
important for concrete computations, and it is needed for the 
applications in \cite{D}.

Unfortunately, stating the answer to the question is a little awkward
due to the variety of cases that can occur.  From the point of view of
classifying involutions there are three types of bilinear spaces:
symplectic, even-dimensional orthogonal, and odd-dimensional
orthogonal.  This leads to six different cases that must be analyzed
for the pair $(U,W)$.  And as the classification of conjugacy classes
of involutions looks slightly different for the three types,
the bookkeeping to handle the direct sum is somewhat clunky.  

In this section we try, to the extent possible, to unify the three cases
into a common classification system.   The end result is still a bit
clunky, but it is manageable.  

\medskip

\subsection{Unification}
For brevity let us write SYMP, EVO, and ODDO for the three types
of bilinear spaces over $\F_2$.  Note that in this nomenclature direct
sums behave as in the chart below:

\vspace{0.1in}

\begingroup
\renewcommand*{\arraystretch}{1.2}

\qquad\qquad
\qquad\qquad
\begin{tabular}{|c||c|c|c|}
\hline
$\oplus$ & $\SYMP$ & $\ODDO$ & $\EVO$ \\ 
\hhline{|=||=|=|=|}
$\SYMP$ & $\SYMP$ & $\ODDO$ & $\EVO$ \\ \hline
$\ODDO$ & $\ODDO$ & $\EVO$ & $\ODDO$ \\ \hline
$\EVO$ & $\EVO$ & $\ODDO$ & $\EVO$ \\ \hline
\end{tabular}
\endgroup

\vspace{0.15in}

For all three types we have a distinguished vector $\Omega$ in the
bilinear space, uniquely characterized by the property that
$\Omega\cdot v=v\cdot v$, for all vectors $v$.  When the bilinear
space is symplectic one has $\Omega=0$.  If $U$ and $W$ are bilinear
spaces and $V=U\oplus W$, one readily checks that
$\Omega_V=\Omega_U+\Omega_W$.

We extend the $DD$-invariant to the SYMP and ODDO cases in a trivial
way that we will now explain.  
Let $(V,b_V)$ be a bilnear space and $\sigma\in \Iso(V)$ be an
involution.  If $V$ is symplectic then define $\tD(\sigma)=D(\sigma)$,
$\talpha(\sigma)=\alpha(\sigma)$, and
\[ DD(\sigma)=[D(\sigma),\alpha(\sigma),D(\sigma),\alpha(\sigma)].
\]
If $V$ is ODDO then $\sigma$ always preserves $\Omega$ and
$b(\sigma(\Omega),\Omega)=b(\Omega,\Omega)=1$.  So the usual
definition of the $\alpha$-invariant is not useful here.  
To get a more useful invariant,
note that $\langle \Omega\rangle^{\perp}\subseteq V$ is
symplectic and $\sigma$ restricts to a map 
$\sigma'\colon\langle\Omega\rangle^{\perp} \ra 
\langle\Omega\rangle^{\perp}$.  Define
$\alpha(\sigma)=\alpha(\sigma')$.  Since $\sigma(\Omega)=\Omega$ it
follows at once that 
$D(\sigma)=D(\sigma')$, so the change to $\sigma'$ is really just for
the purposes of the $\alpha$-invariant.  Define
$\tD(\sigma)=D(\sigma)$, $\talpha(\sigma)=\alpha(\sigma)$, and 
\[ DD(\sigma)=[D(\sigma),\alpha(\sigma),D(\sigma),\alpha(\sigma)]=
[D(\sigma'),\alpha(\sigma'),D(\sigma'),\alpha(\sigma')].
\]

We can now say by Theorem~\ref{th:main-sp}, Theorem~\ref{th:main-TO}, 
and Proposition~\ref{pr:TO-odd} that the
$DD$-invariant completely separates the conjugacy classes of orbits in
each of the SYMP, ODDO, and EVO cases.  Of course, in the first two
cases the $DD$-invariant contains very redundant information.  

The following lemma will be needed in the next section:

\begin{lemma}
\label{le:ODDO-alpha}
Assume $(W,b)$ is ODDO, and that $\sigma$ is an involution in
$\Iso(W)$.  Then the following three statements are equivalent:
\begin{enumerate}[(1)]
\item $\alpha(\sigma)=1$.
\item There exists $w\in W$ such that $b(w,w)=0$ and $b(w,\sigma
w)=1$.
\item There exists $v\in W$ such that $b(v,v)=1$ and $b(v,\sigma
v)=0$.  
\end{enumerate}
\end{lemma}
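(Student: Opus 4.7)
The plan rests on two elementary observations that make the three conditions easy to shuffle between. First, the defining property $b(\Omega,v)=b(v,v)$ of the distinguished vector says that $\langle\Omega\rangle^\perp$ is exactly the hyperplane $\{x\in W : b(x,x)=0\}$. Second, in the ODDO case one has $b(\Omega,\Omega)=1$: indeed, $b(\Omega,\Omega)=\Omega\cdot\Omega$, and for $(\F_2^n,\cdot)$ with $n$ odd the distinguished vector is $[1,1,\ldots,1]$, whose self-pairing is $n\bmod 2=1$.

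For the equivalence $(1)\Leftrightarrow (2)$, I would simply unwind the definition of $\alpha$ in the ODDO case. By construction $\alpha(\sigma)=\alpha(\sigma')$, where $\sigma'$ is the restriction of $\sigma$ to the symplectic subspace $U=\langle\Omega\rangle^\perp$. By the symplectic definition of $\alpha$, we have $\alpha(\sigma')=1$ iff there is some $u\in U$ with $b(u,\sigma u)=1$. But $u\in U$ is precisely the condition $b(u,u)=0$, so this is exactly condition (2).

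The substantive step is $(2)\Leftrightarrow (3)$, which I would prove by the substitution $v=w+\Omega$. Given $w$ as in (2), set $v=w+\Omega$; then $b(v,v)=b(w,w)+b(\Omega,\Omega)=0+1=1$ and, expanding $b(v,\sigma v)=b(w+\Omega,\sigma w+\Omega)$, the four cross terms come out as $b(w,\sigma w)=1$, $b(w,\Omega)=b(w,w)=0$, $b(\Omega,\Omega)=1$, and $b(\Omega,\sigma w)=b(\sigma w,\sigma w)=b(w,w)=0$ (here I use $\sigma\Omega=\Omega$ together with the isometry property, or equivalently the identity $b(\Omega,x)=b(x,x)$). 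Their sum is $0$, so $v$ satisfies (3). The reverse direction is the mirror computation: if $v$ satisfies (3), take $w=v+\Omega$, and the analogous expansions give $b(w,w)=0$ and $b(w,\sigma w)=1$.

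There is really no main obstacle here; the only thing to be careful about is the bookkeeping of the cross terms and the consistent use of the identity $b(\Omega,x)=b(x,x)$ (which replaces $b(\Omega,\sigma w)$ by $b(w,w)$ after invoking the isometry property). Everything else is a one-line verification once the value $b(\Omega,\Omega)=1$ is recorded.
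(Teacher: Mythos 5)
Your proposal is correct and follows essentially the same route as the paper: the equivalence $(1)\Leftrightarrow(2)$ is the definition of $\alpha$ in the ODDO case, and $(2)\Leftrightarrow(3)$ is proved by the substitution $v=w+\Omega$ (using $\sigma\Omega=\Omega$, $b(\Omega,x)=b(x,x)$, and $b(\Omega,\Omega)=1$), exactly as in the paper. Your write-up just spells out the cross-term bookkeeping a bit more explicitly.
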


\begin{proof}
The equivalence of (1) and (2) is just the definition of
$\alpha(\sigma)$.  If $b(w,w)=0$ and $b(w,\sigma w)=1$ then let
$v=w+\Omega$.  Then $\sigma v=\sigma w+\Omega$.  One readily checks
that $b(v,v)=b(\Omega,\Omega)=1$ and $b(v,\sigma v)=b(w,\sigma
w)+b(\Omega,\Omega)=0$.    So (2) implies (3), and the converse is
similar.  
\end{proof}

\subsection{Direct sums}
It is trivial to check that in all cases
$D(\sigma\oplus\theta)=D(\sigma)+D(\theta)$.  It is also trivial to
see that when $U$ and $W$ are both even-dimensional, then
$\alpha(\sigma\oplus \theta)=\max\{\alpha(\sigma),\alpha(\theta)\}$.  
Our ``baseline'' for how $DD(\sigma\oplus\theta)$ relates to
$DD(\sigma)$ and $DD(\theta)$ is that
$DD(\sigma\oplus\theta)=DD(\sigma)\# DD(\theta)$ where
for tuples $X,Y\in \Z\times \Z/2\times \Z\times \Z/2$ we define
\[ X \# Y =[X_1+Y_1,\max\{X_2,Y_2\},X_3+Y_3,\max\{X_4,Y_4\}].
\]
By ``baseline'' we simply mean that this is the result that holds in
the majority of cases, and the exceptional cases can be seen as small
deviations from this baseline.

\begin{thm}
\label{th:DD-osum}
Let $(U,b_U)$ and $(W,b_W)$ be two bilinear spaces over $\F_2$.  
Let $\sigma\in \Iso(U)$ and $\theta\in \Iso(W)$ be two involutions.
Then
\begin{enumerate}[(a)]
\item $DD(\sigma\oplus \theta)=DD(\sigma)\# DD(\theta)$ if either $U$
or $W$ is SYMP.
\item If $U$ and $W$ are both ODDO then
\begin{align*} DD(\sigma\oplus
\theta)&=[D(\sigma)+D(\theta), 1,
D(\sigma)+D(\theta)+1,\max\{\alpha(\sigma),\alpha(\theta)\}]\\
&= \bigl [ DD(\sigma)\# DD(\theta) \bigr ] \# [0,1,1,0].
\end{align*}
\item If $U$ is ODDO and $W$ is EVO then
\[ DD(\sigma\oplus
\theta)=[D(\sigma)+D(\theta),\max\{\alpha(\sigma),\talpha(\theta)\},
D(\sigma)+D(\theta),\max\{\alpha(\sigma),\talpha(\theta)\}].
\]
\item If $U$ and $W$ are both EVO then 
\[ D(\sigma\oplus \theta)=[DD(\sigma)\# DD(\theta) ] + E \]
where $+$ means componentwise-addition and 
\[ E=
\begin{cases}
[0,0,0,0] & \text{if $\tD(\sigma)=D(\sigma)$ or
  $\tD(\theta)=D(\theta)$,} \\
[0,0,-1,0]  & \text{if $\tD(\sigma)>D(\sigma)$ and
  $\tD(\theta)>D(\theta)$,} \\
[0,0,1,0] & \text{if $\tD(\sigma)>D(\sigma)$ and
  $\tD(\theta)<D(\theta)$,} \\
[0,0,1,0] & \text{if $\tD(\sigma)<D(\sigma)$ and
  $\tD(\theta)>D(\theta)$,} \\
[0,0,2,0] & \text{if $\tD(\sigma)<D(\sigma)$ and
  $\tD(\theta)<D(\theta)$.}
\end{cases}
\]

\end{enumerate}
\end{thm}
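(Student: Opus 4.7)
\textit{Proof proposal.} The four-way case split is governed by which of $D$, $\alpha$, $\tD$, $\talpha$ must be computed directly and which come for free. I would first dispatch the universal pieces. The equality $D(\sigma\oplus\theta)=D(\sigma)+D(\theta)$ is immediate from the fact that $(\sigma\oplus\theta)+\Id$ is block-diagonal. For each of $\alpha$, $\talpha$ in the SYMP and EVO settings, the defining linear form on $V=U\oplus W$ splits into its $U$- and $W$-restrictions, so these invariants take the $\max$ over $\sigma$ and $\theta$. For the ODDO versions of $\alpha$ (via Lemma~\ref{le:ODDO-alpha}), one must instead show existence of an isotropic $v\in V$ with $b(v,\rho v)=1$ under precisely the predicted conditions; this reduces to a short computation of when $(0,1)$ lies in the image of the map $V\to\F_2^2$, $v\mapsto(b(v,v),b(v,\rho v))$.

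Parts (a), (b), and (c) then follow quickly. In part (a), $\Omega_U=0$ when $U$ is SYMP, and direct inspection of $m(L)(v)=L(v)+b(v,\Omega)\Omega$ gives $m(\sigma\oplus\theta)=\sigma\oplus m\theta$, reducing $\tD$ and $\talpha$ of the sum to those of the $W$-factor. In part (b), with both $U$ and $W$ ODDO, the vector $v=\Omega_U$ is $(\sigma\oplus\theta)$-fixed with $b(v,v)=1$, which via Proposition~\ref{pr:I-invs}(j) immediately forces $\tD(\sigma\oplus\theta)=D(\sigma\oplus\theta)+1$ and $\alpha(\sigma\oplus\theta)=1$; the $\talpha$ follows from linearity of the forms $f_\rho$. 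Part (c) is then a direct application of Lemma~\ref{le:ODDO-alpha}, boiling down to the condition that $f_\sigma+f_\theta\neq 0$ on $V$ where $f_\rho(v):=b(v,\rho v)+b(v,v)$.

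Part (d), with both $U$ and $W$ EVO, is the genuine work; the crux is computing $\tD(\sigma\oplus\theta)$. Set $\delta_\rho:=\tD(\rho)-D(\rho)\in\{-1,0,1\}$ by Proposition~\ref{pr:DD-props}. I would analyze $\delta_{\sigma\oplus\theta}$ across the nine combinations of $(\delta_\sigma,\delta_\theta)$, using Proposition~\ref{pr:I-invs}(j), (k), (l). On the one hand, $\delta_{\sigma\oplus\theta}=1$ iff there is a $(\sigma\oplus\theta)$-fixed vector $v\in V$ with $b(v,v)=1$; this decomposes to say $\delta_\sigma=1$ or $\delta_\theta=1$. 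On the other, $\delta_{\sigma\oplus\theta}=-1$ iff there is $v\in V$ with $\bigl((\sigma\oplus\theta)+\Id\bigr)v=\Omega_V$ and $b(v,v)=1$, which decomposes to requiring $(\sigma+\Id)u=\Omega_U$, $(\theta+\Id)w=\Omega_W$, and $b_U(u,u)+b_W(w,w)=1$ for some $u,w$.

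The key subclaim is that the set $G_\rho:=\{b(u,u):(\rho+\Id)u=\Omega_\rho\}\subseteq\F_2$ is completely determined by $\delta_\rho$: it is $\emptyset$ when $\delta_\rho=1$, $\{0\}$ when $\delta_\rho=0$, and $\{1\}$ when $\delta_\rho=-1$, all of which follow immediately from Proposition~\ref{pr:I-invs}(k) and (l) pinpointing exactly which of $I_7,I_8$ are nonzero. Granted this, $\delta_{\sigma\oplus\theta}=-1$ iff $(\delta_\sigma,\delta_\theta)\in\{(0,-1),(-1,0)\}$, and the remaining cases yield $\delta_{\sigma\oplus\theta}=0$. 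Unwinding $\delta$ in terms of $\tD$ reproduces the $E$ in (d) verbatim. The one potentially worrisome point is the $\delta_\rho=0$ clause of the subclaim: a priori one might worry that $\Omega_\rho\notin\im(\rho+\Id)$ is possible there, giving $G_\rho=\emptyset$ instead of $\{0\}$ and breaking the clean formula, but $I_7>0$ whenever $D=\tD$ excludes this.
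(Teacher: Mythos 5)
Your proposal is correct, and for the crux of the theorem it takes a genuinely different route from the paper. For parts (a)--(c) your arguments are streamlined versions of what the paper does: the identity $m(\sigma\oplus\theta)=\sigma\oplus m(\theta)$ when $U$ is symplectic, the observation that $\Omega_U$ is a $(\sigma\oplus\theta)$-fixed vector of norm $1$ in part (b) (so $I_5\neq 0$ and Proposition~\ref{pr:I-invs}(j) plus Proposition~\ref{pr:DD-props} give $\tD=D+1$ at once), and the splitting $f_{\sigma\oplus\theta}=f_\sigma+f_\theta$ in part (c) all replace the paper's more hands-on case tables, but rest on the same facts. The real divergence is in part (d): the paper computes $\tD(\sigma\oplus\theta)=\dim\im\bigl(m(\sigma\oplus\theta)+\Id\bigr)$ head-on, writing the image as $P+Q$ with $P=F(U)$, $Q=F(W)$ and tracking which of $\Omega_U$, $\Omega_W$, $\Omega_U+\Omega_W$ lie in each summand via the equivalences (\ref{eq:eqs}); you instead never compute a rank, but determine the sign $\delta=\tD-D$ of the sum by deciding which of $S_5$, $S_7$, $S_8$ for $\sigma\oplus\theta$ is nonempty, decomposing those membership conditions across the two factors, and then recovering $\tD$ from additivity of $D$. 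Both arguments ultimately hinge on Proposition~\ref{pr:I-invs}(c),(j)--(l) (your $G_\rho$ subclaim is exactly the content of (\ref{eq:eqs}) combined with (k) and (l), including the point that $I_7>0$ when $D=\tD$ rules out $G_\rho=\emptyset$), but your version in effect proves directly the charge multiplicativity $c(\sigma\oplus\theta)=c(\sigma)c(\theta)$ that the paper only records after the fact in a remark, which makes the five-way formula for $E$ fall out more transparently; the paper's version buys an explicit description of $\im\bigl(m(\sigma\oplus\theta)+\Id\bigr)$ along the way. Everything you assert checks out, so this is a legitimate alternative proof.
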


\begin{remark}
In part (d), the main point is the behavior of the $\tD$ invariant.  
Here is a bookkeeping system that contains the same information as the
five cases listed in (d).
Let $C$ be the monoid $\{-1,0,1\}$ with integer multiplication.  Every
involution $\sigma\in \TO(2k)$ may be given a ``charge''
$c(\sigma)$ in $C$ as follows.  
If $\tD(\sigma)>D(\sigma)$ then $c(\sigma)=0$.  If
$\tD(\sigma)=D(\sigma)$ then $c(\sigma)=1$.  If
$\tD(\sigma)<D(\sigma)$ then $c(\sigma)=-1$.  
Under this system one has $c(\sigma\oplus\theta)=c(\sigma)c(\theta)$,
where the multiplication of charges takes place in $C$.  This formula
suggests that
the $4$-tuple $[D(\sigma),c(\sigma),\alpha(\sigma),\talpha(\sigma)]$
might be a more convenient fundamental system of invariants for involutions, 
as opposed to the
$DD$-invariant.  We have not gone this route mainly because the
definition of $c(\sigma)$ is not particularly intuitive, and in
practice it would usually be computed via $\tD(\sigma)$ anyway.  
\end{remark}

\begin{proof}[Proof of Theorem~\ref{th:DD-osum}]
This proof is somewhat long and clunky, due to the number of cases.
As we remarked before, the $D$-invariant is always additive---so we
will ignore it for the remainder of the proof, and concentrate
on the other three invariants.  Set $V=U\oplus W$, and note that
$\Omega_V=\Omega_U+\Omega_W$.  

For part (a) we assume that $U$ is symplectic.  There are then three
cases, depending on the type of $W$.  If $W$ is also symplectic then
the result is easy.  Assume that $W$ is EVO, so that $V$ is also
EVO.    
Observe that $\tD(\sigma\oplus\theta)$ is the
dimension of the space
\[ \bigl \{(u+\sigma u+(b(u,u)+b(w,w))\Omega_U, w+\theta w +
(b(u,u)+b(w,w))\Omega_W)\,|\, u\in U, w\in W\bigr \}.
\]
But $\Omega_U=0$ and
$b(u,u)=0$ for all $u\in U$, so this simplifies to
\[
\bigl \{(u+\sigma u, w+\theta w +
b(w,w)\Omega_W)\,|\, u\in U, w\in W\bigr \}
\]
which splits as
\[
\{ u+\sigma u\,|\,u\in U\} \oplus \{w+\theta w+b(w,w)\Omega_W\,|\, w\in W\}.
\] 
The dimensions of the two summands are $D(\sigma)=\tD(\sigma)$ and
$\tD(\theta)$, respectively.  So
$\tD(\sigma\oplus\theta)=\tD(\sigma)+\tD(\theta)$.  

One has $\alpha(\sigma\oplus\theta)=1$ if and only if there exist
$u\in U$, $w\in W$ such that
\[ 1=b(u+w,\sigma u+\theta w)=b(u,\sigma u)+b(w,\theta w) \]
and clearly this has a solution if and only if either
$\alpha(\sigma)=1$ or $\alpha(\theta)=1$.  So
$\alpha(\sigma+\theta)=\max\{\alpha(\sigma),\alpha(\theta)\}$.  

Likewise,
$\talpha(\sigma\oplus \theta)=1$ if and only if there exist $u\in U$,
$w\in W$ such that
\begin{align*} 
1&=b(u+w,\sigma u+\theta w+ (b(u,u)+b(w,w))\Omega_V) \\
&= b(u,\sigma u)+b(w,\theta w)+b(u,u)+b(w,w) \\
&= b(u,\sigma u) + b(w,\theta w) + b(w,w) \ \ \text{since $U$ is symplectic}
\\
&= b(u,\sigma u)+ b(w,\theta w+b(w,w)\Omega_W).
\end{align*}
Clearly such $u$ and $w$ exist if and only if either
$\alpha(\sigma)=1$ or $\talpha(\theta)=1$.  Since
$\alpha(\sigma)=\talpha(\sigma)$, we can write
$\talpha(\sigma\oplus\theta)=\max\{\talpha(\sigma),\talpha(\theta)\}$.
This finishes the proof when $W$ is EVO.   

To complete the proof for (a), assume that $U$ is SYMP and $W$ is
ODDO.  Here $V$ is ODDO, so $\tD$ and $\talpha$ are redundant---it
only remains for us to compute $\alpha$ for $\sigma\oplus \theta$.  We have
$\alpha(\sigma\oplus \theta)=1$ if and only if there exists $u\in U$,
$w\in W$ such that $0=b(u+w,\Omega_V)=b(w,\Omega_W)$ and
\begin{align*} 
1 = b(u+w,\sigma u+\theta w  )
  = b(u,\sigma u) + b(w,\theta w).  
\end{align*}
Having a $u$ such that $b(u,\sigma u)=1$ is equivalent to
$\alpha(\sigma)=1$.  Having a $w$ such that $b(w,\Omega_W)=0$ and
$b(w,\theta w)=1$ is equivalent to 
$\alpha(\theta)=1$.  So $\alpha(\sigma\oplus
\theta)=\max\{\alpha(\sigma),\alpha(\theta)\}$.  

For (b), assume that $U$ and $W$ are ODDO.  Here
$\Omega_V=\Omega_U+\Omega_W$.  
We readily compute that
\[ b\bigl ( (\sigma\oplus \theta)(\Omega_U), \Omega_U\bigr)=b\bigl (\sigma(\Omega_U),
\Omega_U\big ) =b(\Omega_U, \Omega_U)=1,
\]
so $\alpha(\sigma\oplus \theta)=1$.  
Let $F=m(\sigma\oplus\theta)$.  Recall that $F\colon V\ra V$ is
the map given by $(\sigma\oplus\theta)(v)+b(v,v)\Omega_V$.  
But we can decompose $V$ as $V=\langle \Omega_U\rangle^{\perp}\oplus
\langle \Omega_W\rangle^{\perp} \oplus \langle \Omega_U\rangle \oplus
\langle \Omega_W\rangle$.  The first two summands are symplectic, so
on these $F$ agrees with $\sigma$ and $\theta$, respectively.  On the
last two summands $F$ is readily checked to satisfy
$F(\Omega_U)=\Omega_W$ and $F(\Omega_W)=\Omega_U$.   It follows at
once that
$\talpha(\sigma\oplus\theta)=\alpha(F)=\max\{\alpha(\sigma),\alpha(\theta)\}$.
 Moreover, $\tD(\sigma\oplus\theta)$ is the dimension of $\im(F+\Id)$,
 which clearly decomposes as $\im(\sigma+\Id)\oplus \im(\theta\oplus
 \Id) \oplus \langle\Omega_V\rangle$.  So
 $\tD(\sigma\oplus\theta)=D(\sigma)+D(\theta)+1=\tD(\sigma)+\tD(\theta)+1$.

Now we turn to (c), so assume $U$ is ODDO and $V$ is EVO.  Then
$U\oplus V$ is ODDO, so $\tD(\sigma\oplus\theta)=D(\sigma\oplus \theta)$ and
$\talpha(\sigma\oplus \theta)=\alpha(\sigma\oplus\theta)$.  We only need to
compute $\alpha(\sigma\oplus\theta)$.  This invariant is equal to $1$
if and only if there exist $u\in U$, $w\in W$ such that 
\addtocounter{subsection}{1}
\begin{align}
\label{eq:ODDO-eqs}
&0=b(u+w,u+w)= b(u,u)+b(w,w), \text{\ and}\\
& 1=b(u+w,\sigma u+\theta w)= b(u,\sigma u)+b(w,\theta w). \notag
\end{align}
These equations break down into four possibilities:

\vspace{0.1in}

\begin{tabular}{r|c|c|c|c}
& I & II & III & IV \\ \hline
$b(u,u), \ b(w,w)$ & 0, 0 & 0, 0 & 1, 1 &
1, 1\\
$b(u,\sigma u), \ b(w,\theta w)$ & 1, 0 & 0,1 & 1,0 & 0,1
\end{tabular}

\vspace{0.1in}
\noindent
In case I we have $\alpha(\sigma)=1$, by definition of $\alpha$.  In case II we have
$I_3(\theta)> 0$, and so $\talpha(\theta)=\alpha(\theta)=1$ by
Proposition~\ref{pr:I-invs}(f,g).  In case III we have
$I_4(\theta)>0$, so 
$\talpha(\theta)=1$ by Proposition~\ref{pr:I-invs}(g).  And in case IV we have
$\alpha(\sigma)=1$, by Lemma~\ref{le:ODDO-alpha}.  In all cases we
have either $\alpha(\sigma)=1$ or $\talpha(\theta)=1$.

Conversely, if $\alpha(\sigma)=1$ then we have a $u\in U$ such that
$b(u,u)=0$ and $b(u,\sigma u)=1$.  Then the pair $(u,0)$ is 
a solution to (\ref{eq:ODDO-eqs}).  Likewise, if $\talpha(\theta)=1$
then by Proposition~\ref{pr:I-invs}(g) $I_4(\theta)>0$; so 
there exists $w\in W$ such that $b(w,w)=1$ and $b(w,\sigma w)=0$.
Then $(\Omega_U,w)$ is a solution to (\ref{eq:ODDO-eqs}).  So we have
now proven that
\[ \alpha(\sigma\oplus \theta)=1 \iff (\text{$\alpha(\sigma)=1$ or
  $\talpha(\theta)=1$}).
\]
This is equivalent to $\alpha(\sigma\oplus
\theta)=\max\{\alpha(\sigma),\talpha(\theta)\}$.  This completes (c).

Finally, we turn to (d).  
The computations of $\alpha$ and $\talpha$ for
$\sigma\oplus\theta$ are straightforward and left to the reader.
It remains to deal with $\tD$.  
Let $F=m(\sigma\oplus\theta)+\Id$.  Recall that
$F\colon V\ra V$ is the map given by $v\mapsto
(\sigma\oplus\theta)(v)+v+b(v,v)\Omega_V$.  Let $M$ be the image of $F$, so that
$\tD(\sigma\oplus\theta)=\dim M$.  

Note that $\Omega_V=\Omega_U\oplus
\Omega_W$.  Define
\[ P=F(U)=\{\sigma u+u+b_U(u,u)\Omega_U + b_U(u,u)\Omega_W\,|\, u\in
U\}
\]
and
\[ Q=F(V)=\{\theta w+w+b_W(w,w)\Omega_W + b_W(w,w)\Omega_U\,|\, w\in
W\}.
\]
Then $M=P+Q$, and clearly $P\cap Q\subseteq \langle
\Omega_U,\Omega_W\rangle$.  

We claim that
\addtocounter{subsection}{1}
\begin{align}
\label{eq:eqs}
 &\Omega_U\in P \iff I_7(\sigma)\neq 0, \quad \Omega_W\in Q\iff
I_7(\theta)\neq 0\\ \notag
&\Omega_W\in P \iff I_8(\sigma)\neq 0, \quad \Omega_U\in Q\iff
I_8(\theta)\neq 0, \\ \notag
&\Omega_U+\Omega_W\in P\iff I_5(\sigma)\neq 0, \quad
\Omega_U+\Omega_W\in Q\iff I_5(\theta)\neq 0.
\end{align}
These are all easy statements.  For example, clearly $\Omega_U\in P$ if and
only if there exists a $u\in U$ such that $b_U(u,u)=0$ and
$\sigma(u)+u=\Omega_U$.  This is precisely the condition that
$I_7(\sigma)\neq 0$.  The other statements are similar.

Suppose that $I_7(\sigma)=I_7(\theta)=0$.  By
Proposition~\ref{pr:I-invs}(k) this is the assumption that $D(\sigma)\neq
\tD(\sigma)$ and $D(\theta)\neq \tD(\theta)$.  Also by
Proposition~\ref{pr:I-invs}, either $I_5(\sigma)$ or $I_8(\sigma)$ is
nonzero, and similarly for  $\theta$.  So we have ($\Omega_W\in P$ or
$\Omega_U+\Omega_W\in P$) and ($\Omega_U\in Q$ or
$\Omega_U+\Omega_W\in Q$).  Note that all four combinations lead to
$\Omega_U+\Omega_W\in P+Q$.   
If $I_8$ is nonzero for
either $\sigma$ or $\theta$ 
then one readily
checks using (\ref{eq:eqs}) that $\Omega_U,\Omega_W\in P+Q$.
Therefore
\[ M = \langle \Omega_U,\Omega_W\rangle + 
\{\sigma(u)+u\,|\,u\in U\}
+ \{\sigma(w)+w\,|\,w\in W\}.
\]
Note that the second space has dimension $D(\sigma)$, and the third
space has dimension $D(\theta)$.  Moreover,
$I_8(\sigma)\neq 0$ if and only if $\Omega_U$ is in the second space, and
$I_8(\theta)\neq 0$ if and only if  $\Omega_W$ is in the third space.
We will use these observations to analyze $\dim M$ in the various cases.  

If $\tD(\sigma)>D(\sigma)$ and $\tD(\theta)<D(\theta)$ then by
Proposition~\ref{pr:I-invs} we know $I_8(\sigma)=0$,
$I_8(\theta)\neq 0$, and $I_7(\sigma)=I_7(\theta)=0$.  So
\[ \dim M=1+D(\sigma)+D(\theta)=1+(\tD(\sigma)-1)+(\tD(\theta)+1)=\tD(\sigma)+\tD(\theta)+1.
\]
The analysis is identical in the opposite case $\tD(\sigma)<D(\sigma)$
and $\tD(\theta)>D(\theta)$.  
If $\tD(\sigma)<D(\sigma)$ and $\tD(\theta)<D(\theta)$ then
$I_7(\sigma)=I_7(\theta)=0$ and {\it both\/}
$I_8(\sigma)$ and $I_8(\theta)$ are nonzero, therefore $\dim
M=D(\sigma)+D(\theta)=\tD(\sigma)+\tD(\theta)+2$.  

Next assume 
$\tD(\sigma)>D(\sigma)$ and $\tD(\theta)>D(\theta)$.  Then
by Proposition~\ref{pr:I-invs} we know
 both
$I_5(\sigma)$ and $I_5(\theta)$ are nonzero.    
So $\Omega_U+\Omega_W\in P\cap Q$ and we can write
\[ M=\langle \Omega_U+\Omega_W\rangle + 
\{\sigma(u)+u\,|\,u\in U\}
+ \{\sigma(w)+w\,|\,w\in W\}.
\]
Since $I_7(\sigma)=I_8(\sigma)=0$, $\Omega_U$ is not contained in the
middle subspace.  Similarly, $\Omega_W$ is not contained in the right
subspace.  It follows that the above is a direct sum decomposition of $M$,
and so $\dim M=1+D(\sigma)+D(\theta)=\tD(\sigma)+\tD(\theta)-1$.  

We only have left to analyze the case where $\tD(\sigma)=D(\sigma)$
(or the parallel case where $\sigma$ and $\theta$ are interchanged).  
Here $I_7(\sigma)\neq 0$ and $I_5(\sigma)=I_8(\sigma)=0$.  
So $\Omega_U\in P$ and we can therefore write
\[ M = 
\langle \Omega_U\rangle + \{\sigma(u)+u+b_U(u,u)\Omega_W\,|\,
u\in U\} + \{\theta(w)+w+b_W(w,w)\Omega_W\,|\,w\in W\}.
\]
The dimension of the third summand is $\tD(\theta)$.  Because
$I_7(\sigma)\neq 0$, $\Omega_U$ lies in the second summand and so the
$\langle \Omega_U\rangle$ piece can be ignored.  The second summand is
contained in $\{\sigma(u)+u\,|\,u\in U\}\oplus \langle
\Omega_W\rangle$, but since $I_5(\sigma)= 0$ it does not contain
$\Omega_W$.  So its dimension is clearly the same as
$\{\sigma(u)+u\,|\,u\in U\}$, which is $D(\sigma)$.  We also get
\[ M=
\{\sigma(u)+u+b_U(u,u)\Omega_W\,|\,
u\in U\} 
\oplus \{\theta(w)+w+b_W(w,w)\Omega_W\,|\,w\in W\},
\]
since the only vector that could possibly lie in the intersection is
$\Omega_W$ and we have just observed it is not in the left summand.  
So
$\dim M = D(\sigma)+\tD(\theta)=\tD(\sigma)+\tD(\theta)$.
\end{proof}


\bibliographystyle{amsalpha}

\end{document}